\newtheorem{thm}{Theorem}
\newtheorem{rem}{Remark}
\newtheorem{prop}{Proposition}
\newtheorem{lem}{Lemma}
\newtheorem{cor}{Corollary}
\begin{document}
	\title{\bf New tools to study 1-11-representation of graphs}
	\author[a]{Mikhail Futorny\thanks{Email: mfutorny@usp.br}}
	\author[b]{Sergey Kitaev\thanks{Email: sergey.kitaev@strath.ac.uk}}
	\author[c]{Artem Pyatkin\thanks{Email: artem@math.nsc.ru}}
	\affil[a]{Institute of Mathematics and Statistics, University of S\~{a}o Paulo, R.do Mat\~{a}o, 1010, Brazil}
	\affil[b]{Department of Mathematics and Statistics, University of Strathclyde, 26 Richmond Street, Glasgow G1 1XH, United Kingdom}
	\affil[c]{Sobolev Institute of Mathematics, Koptyug ave, 4, Novosibirsk, 630090, Russia}
	\date{}
	\maketitle

\begin{abstract}
The notion of a $k$-11-representable graph was introduced by Jeff Remmel in 2017 and studied by Cheon et al.\ in 2019 as a natural extension of the extensively studied notion of word-representable graphs, which are precisely 0-11-representable graphs. A graph $G$ is $k$-11-representable if it can be represented by a word $w$ such that for any edge (resp., non-edge) $xy$ in $G$ the subsequence of $w$ formed by $x$ and $y$ contains at most $k$ (resp., at least $k+1$) pairs of consecutive equal letters. A remarkable result of Cheon at al.\ is that {\em any} graph is 2-11-representable, while it is unknown whether every graph is 1-11-representable. Cheon et al.\ showed that the class of 1-11-representable graphs is strictly larger than that of word-representable graphs, and they introduced a useful toolbox to study 1-11-representable graphs.

In this paper, we introduce new tools for studying 1-11-representation of graphs. 
We apply them for establishing 1-11-representation of Chv\'{a}tal graph,  Mycielski graph,  split graphs, and graphs whose vertices can be partitioned into a comparability graph and an independent set.  
\end{abstract}

\vspace{2mm} \noindent{\bf Keywords}: 1-11-representable graph, word-representable graph, Chv\'{a}tal graph, split graph, Mycielski graph, comparability graph
\vspace{2mm}

\section{Introduction}

\noindent
Various ways to represent graphs have evolved into a field of study, interesting from both mathematical and computer science perspectives \cite{Spinrad}. Of more relevance to us is the theory of word-representable graphs, \cite{KL15}, admitting a myriad of various generalizations. The basic idea here is to encode a given graph by a word using specified rules for defining edges/non-edges. For example, in the word-representable graphs alternations of letters in words define edges/non-edges, whilst this idea has been generalized by utilizing other patterns \cite{JKPR}. A given graph may, or may not admit representation under a given set of rules, so the main concern in the area of interest to us is whether a given graph is representable. Other research questions may include  studying algorithmic aspects of representations, its minimal lengths, connections to other structures like graph orientations, applications, etc. 

A particular way to represent graphs is $k$-11-representation introduced by Jeff Remmel in 2017 and studied by Cheon et al.\ in~\cite{CKKKP2019}. This way to represent graphs, formally defined in Section~\ref{k-11-repr-sub}, is a natural way to generalize the notion of a word-representable graph that are precisely 0-11-representable graphs. Remarkably, {\em any} graph is 2-11-representable and the class of 1-11-representable graphs is strictly larger than that of 0-11-representable graphs (i.e.\ word-representable graphs); see~\cite{CKKKP2019}. 
It is still unknown whether there exist graphs that are not 1-11-representable. Clearly, such graphs (if they exist) must be non-word-representable.
Hence, proving that various classes of non-word-representable graphs are 1-11-representable is a worthwhile direction of research.  

\subsection{Our results and organization of the paper}

\noindent
In this paper, we observe the need of introducing new tools to study 1-11-representable graphs as the known set of tools does not allow to establish 1-11-representation of some  known non-word-representable graphs. In particular, we introduce a new tool for establishing 1-11-representation of the Chv\'{a}tal graph and another tool for proving that every split graph is 1-11-representable. We also generalize these tools to prove 1-11-representability for certain more general classes of graphs. Finally, we revisit the proof in~\cite{CKKKP2019} that every graph on at most 7 vertices is 1-11-representable to fill in the gap in the proof caused by usage of an incomplete list of small non-word-representable graphs, where two graphs were missing.

The paper is organized as follows. In Section~\ref{prelim-sec} we introduce all (classes of) graphs considered  in this paper highlighting in separate subsections more important word-representable graphs and related to them semi-transitive orientations (Subsection~\ref{wrg-sec}) and $k$-11-representable graphs (Subsection~\ref{k-11-repr-sub}). Also, in Subsection~\ref{known-tools-sec} we provide a comprehensive list of known results about 1-11-representable graphs that provide a powerful base to study 1-11-representation of graphs. In Section~\ref{new-tool-sec} we introduce new tools to study 1-11-representable graphs and discuss its applications for the Chv\'{a}tal graph in Subsection~\ref{Chvatal-sub} and for split graphs and for graphs whose vertices can be partitioned into a comparability graph and an independent set  in Subsection~\ref{split-sub}. Also, we complete justification of the fact that all graphs on at most 7 vertices are 1-11-representable in Subsection~\ref{7-verti-sub} and provide concluding remarks in 
 Section~\ref{final-sec}. 

\section{Preliminaries}\label{prelim-sec}

\noindent
We begin with defining (classes of) graphs appearing in this paper under various contexts.
Throughout this paper, we denote by $G\setminus v$ the graph obtained from a graph $G$ by deleting a vertex $v\in V(G)$ and all edges adjacent to it. Also, for any
$A\subseteq V$ and $v\in V$ let $N_A(v):=\{ u\in A\ |\ uv\in E\}$, that is, $N_A(v)$ is the set of {\em neighbours} of $v$ in $A$. If $A=V$ we write simply $N(v)$. We use the notation $G[A]$ for the subgraph of $G$ induced by the subset $A$.

A {\em circle graph} is the intersection graph of a set of chords of a circle, i.e.\ it is an undirected graph whose vertices can be associated with chords of a circle such that two vertices are adjacent if and only if the corresponding chords cross each other \cite{S1994}. An {\em interval graph} has one vertex for each interval in a family of intervals on a line, and an edge between every pair of vertices corresponds to intervals that intersect \cite{LB1962}. 
A {\em split graph} is a graph in which the vertices can be partitioned into a clique and an independent set \cite{G80,HS1981}.  
For an arbitrary graph $G=(V,E)$ with $V=\{v_1, \ldots, v_n\}$, define the $Mycielski$ graph $\mu(G)=(V\cup U\cup \{x\}, E\cup E')$ where $U=\{u_1,\ldots, u_n\}$ and
$$E'= \cup_{i=1}^n(\{ xu_i\}\cup \{ yu_i \mbox{ for all } y\in N_V(v_i)\}).$$
In other words, $\mu(G)$ contains $G$ itself as a subgraph, the independent set consisting of a copy of each its vertex, and a vertex $x$ adjacent to all these copies. For example, the graphs $\mu(C_{3})$, $\mu(C_{4})$, and $\mu(C_{5})$ are in  Figure~\ref{Mycielski}. 
The importance of Mycielski graphs follows from the well-known fact \cite{Myc} that this construction allows to increase the chromatic number of a triangle-free graph without adding new triangles (i.e if $G$ is a triangle-free $k$-chromatic graph then $\mu(G)$ is a triangle-free $(k+1)$-chromatic graph).  The Chv\'{a}tal graph is presented to the left in  Figure~\ref{Chvatal-graph}. 

An orientation of a graph is {\em transitive}, if the presence of the edges $u\rightarrow v$ and $v\rightarrow z$ implies the presence of the edge $u\rightarrow z$. An undirected graph $G$ is a {\em comparability graph} if $G$ admits a transitive orientation. 

\subsection{Word-representable graphs and semi-transitive orientations}\label{wrg-sec}

\noindent 
Two letters $x$ and $y$ alternate in a word $w$ if after deleting in $w$ all letters but the copies of $x$ and $y$ we either obtain a word $xyxy\cdots$ or a word $yxyx\cdots$ (of even or odd length).  A graph $G=(V,E)$ is {\em word-representable} if and only if there exists a word $w$
over the alphabet $V$ such that letters $x$ and $y$, $x\neq y$, alternate in $w$ if and only if $xy\in E$. 
The unique minimum (by the number of vertices) non-word-representable graph on 6 vertices is the wheel graph $W_5$, while there are 25 non-word-representable graphs on 7 vertices. We note that the original list of 25 non-word-representable graphs on 7 vertices presented, for example, in~\cite{KL15} contains two incorrect graphs, so we refer to \cite{KitaevSun} for the corrected catalog of the 25 graphs. 

A graph is {\em permutationally representable} if it can be represented by concatenation of permutations of (all) vertices. Thus, the class of permutationally representable graphs is a subclass of word-representable graphs. The following theorem classifies these graphs.

\begin{thm}[\cite{KL15}]\label{comp-thm} A graph is permutationally representable if and only if it is a comparability graph. \end{thm}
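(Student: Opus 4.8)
The plan is to deduce both implications from one structural observation about concatenations of permutations. Suppose $w=\pi_1\pi_2\cdots\pi_k$ is a concatenation of permutations of $V$. For distinct letters $x,y$, the subsequence of $w$ induced by $\{x,y\}$ is obtained by reading, block by block, the two letters in the order they occur in each $\pi_i$; hence it is a concatenation of $k$ blocks, each equal to $xy$ or $yx$. A one-line induction on the blocks shows that such a word is alternating (i.e.\ equals $xyxy\cdots$ or $yxyx\cdots$) if and only if all $k$ blocks coincide, that is, if and only if $x$ precedes $y$ in every $\pi_i$, or $y$ precedes $x$ in every $\pi_i$. I would isolate this as a lemma, since it drives everything that follows.

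For the forward direction, assume $G$ is permutationally representable, say by $w=\pi_1\cdots\pi_k$. By the lemma, each edge $xy\in E(G)$ has a common direction across all blocks, and I orient $x\to y$ when $x$ precedes $y$ in every $\pi_i$. This orientation is transitive: if $x\to y$ and $y\to z$ are edges, then $x$ precedes $y$ and $y$ precedes $z$ in every $\pi_i$, so $x$ precedes $z$ in every $\pi_i$; by the lemma $x$ and $z$ alternate in $w$, hence $xz\in E(G)$ with orientation $x\to z$. Thus $G$ has a transitive orientation, i.e.\ $G$ is a comparability graph.

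For the converse, fix a transitive orientation $T$ of $G$. A short argument shows $T$ is acyclic, since a directed cycle would, via transitivity, force some edge of $G$ to be oriented in both directions; being also irreflexive, $T$ is then a strict partial order $P=(V,<_P)$ with $x<_P y$ exactly when $x\to y$ in $T$. I would then invoke the classical fact that a finite poset is the intersection of its linear extensions: listing all linear extensions $L_1,\dots,L_m$ of $P$ and viewing each as a permutation of $V$, put $w=L_1L_2\cdots L_m$. If $xy\in E(G)$ then $x,y$ are comparable in $P$, so they appear in the same order in every $L_i$, and the lemma makes them alternate in $w$; if $xy\notin E(G)$ then $x,y$ are incomparable in $P$, so one linear extension puts $x$ before $y$ and another puts $y$ before $x$, and by the lemma they do not alternate. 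Hence $w$ represents $G$. The only routine computation is the block-induction lemma; the one named external input is the realizer theorem for finite posets used in the converse, and the point requiring the most care is simply phrasing the lemma cleanly and checking that a transitive orientation really is acyclic, so that it genuinely yields a partial order.
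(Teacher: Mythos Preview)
Your proof is correct. The block lemma is right: restricting a concatenation of permutations to two letters gives a word of length $2k$ built from blocks $xy$ or $yx$, and any change of block type produces a repeated letter at the junction, so alternation is equivalent to all blocks agreeing. The forward direction then reads off a transitive orientation directly, and for the converse your check that a transitive orientation is acyclic is sound (a shortest directed cycle together with transitivity would force an edge to carry both orientations), after which the Szpilrajn/Dushnik--Miller fact that incomparable elements can be separated by linear extensions finishes the job.

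There is, however, nothing in the paper to compare your argument against: Theorem~\ref{comp-thm} is quoted from \cite{KL15} as a known background result and is not proved here. What you have written is essentially the standard proof of this classical fact, so your approach is exactly what one would expect to find in the cited reference.
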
 

An orientation of a graph is {\em semi-transitive} if it is acyclic, and for any directed path $v_0\rightarrow v_1\rightarrow \cdots \rightarrow v_k$ either there is no arc from $v_0$ to $v_k$, or $v_i\rightarrow v_j$ is an arc for all $0\leq i<j\leq k$. An induced subgraph on at least four vertices $\{v_0,v_1,\ldots,v_k\}$ of an oriented graph is a {\em shortcut} if it is acyclic, non-transitive, and contains both the directed path $v_0\rightarrow v_1\rightarrow \cdots \rightarrow v_k$ and the arc $v_0\rightarrow v_k$, that is called the {\em shortcutting edge}. A semi-transitive orientation can then be alternatively defined as an acyclic shortcut-free orientation. A fundamental result in the area of word-representable graphs is the following theorem.

\begin{thm}[\cite{Halldorsson}]\label{semi-trans-thm} A graph is word-representable if and only if it admits a semi-transitive orientation. \end{thm}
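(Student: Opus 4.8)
\medskip
\noindent
Since the statement is an equivalence, the plan is to establish the two implications separately. For ``word-representable $\Rightarrow$ semi-transitive orientation'', I would first pass to a normal form, using the standard fact that every word-representable graph admits a $k$-\emph{uniform} representing word $w$ (one in which every letter occurs exactly $k$ times) for some $k$. Given such a $w$, orient each edge $xy$ of $G$ from $x$ to $y$ exactly when the leftmost copy of $x$ precedes the leftmost copy of $y$ in $w$. Acyclicity is immediate, since a directed cycle would force the leftmost occurrences of its vertices to increase strictly around the cycle and then return to the start. The content is in ruling out shortcuts: from a hypothetical shortcut $v_0\to v_1\to\cdots\to v_k$ with shortcutting arc $v_0\to v_k$ and a non-adjacent pair among the $v_i$, one gets the chain $\ell(v_0)<\ell(v_1)<\cdots<\ell(v_k)$ of leftmost occurrences, and uniformity pins the copies of consecutive path vertices into perfectly interleaved blocks; a careful bookkeeping of the positions of the path vertices then forces an alternation between the supposedly non-adjacent pair, a contradiction. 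This analysis is somewhat technical but essentially routine.

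\medskip
\noindent
For the converse, ``semi-transitive orientation $\Rightarrow$ word-representable'', I would induct on $|V(G)|$. Fix a semi-transitive orientation of $G$ and let $v$ be a source. Deleting a vertex preserves semi-transitivity, since a shortcut in $G\setminus v$ with the restricted orientation is already an induced shortcut in $G$; so by the inductive hypothesis $G\setminus v$ has a representing word $w'$, which we may take $k$-uniform. It then remains to splice copies of $v$ into $w'$ so that $v$ alternates with exactly its out-neighbours $O:=N^+(v)$ and with no other vertex. The key structural input is that the orientation restricted to $O$ is in fact \emph{transitive}: if $a,b,c\in O$ with $a\to b$ and $b\to c$, then $v\to a\to b\to c$ is a directed path and $v\to c$ is an arc, so semi-transitivity forces $a\to c$. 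Hence $G[O]$ is a comparability graph, so by Theorem~\ref{comp-thm} it is permutationally representable; the idea is to arrange $w'$ so that its subword on $O$ is a concatenation of permutations of $O$, and then insert one copy of $v$ immediately before each such permutation-block, making $v$ alternate with every element of $O$ without altering any alternation among the remaining letters.

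\medskip
\noindent
The main obstacle is exactly this last step. A naive insertion does not work in general: if two out-neighbours $u_1,u_2$ are non-adjacent, $w'$ may contain them in the order $\dots u_1u_1u_2u_2\dots$, and then no choice of positions for the copies of $v$ can make $v$ alternate with both; moreover one must at the same time guarantee that $v$ does \emph{not} alternate with its non-neighbours, which kills the cheap fix of appending surplus copies of $v$ at one end. I therefore expect the proof to require strengthening the inductive hypothesis --- carrying along not just ``$G\setminus v$ is word-representable'' but ``$G\setminus v$ has a uniform representing word whose restriction to the out-neighbourhood of the chosen source is a concatenation of permutations of that out-neighbourhood'' --- and the crux is verifying that this stronger property survives the deletion of a source, which is where the transitivity of the orientation on $O$ established above should do the work.
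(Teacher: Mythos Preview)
The paper does not prove Theorem~\ref{semi-trans-thm}; it is quoted from \cite{Halldorsson} as a foundational result and used as a black box, so there is no ``paper's own proof'' to compare against. What follows is therefore an assessment of your sketch on its own merits.

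Your forward direction (word-representable $\Rightarrow$ semi-transitive) is the standard argument and is essentially correct. With a $k$-uniform representant and the leftmost-occurrence orientation, alternation of consecutive path vertices together with alternation of the endpoints $v_0,v_k$ forces the $j$-th copies of all $v_i$ to lie strictly between the $j$-th and $(j{+}1)$-st copies of $v_0$, whence every pair $v_i,v_j$ alternates and the path is transitive; so no shortcut exists.

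The reverse direction, however, has a genuine gap that your own final paragraph already points at but does not close. The strengthened inductive hypothesis you propose is not stable under the inductive step as written: you would build the word for $G$ so that its restriction to $O=N^+(v)$ is a concatenation of permutations of $O$, but to continue the induction you must next delete a source $v'$ of $G$ and need the word for $G\setminus v'$ to have this property with respect to $N^+(v')$, a \emph{different} set. Nothing in your construction guarantees that inserting copies of $v$ in front of the $O$-permutation-blocks yields a word whose restriction to $N^+(v')$ is again a concatenation of permutations. Fixing this requires either a much stronger invariant (control over the restriction to \emph{every} out-neighbourhood simultaneously) or a different organization of the argument. The proof in \cite{Halldorsson} does not proceed by peeling off sources one at a time; it gives a global construction of a uniform representant directly from the semi-transitive orientation, which sidesteps exactly this difficulty.
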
 

For instance, it follows from Theorem~\ref{semi-trans-thm} that each 3-colorable graph is word-representable (just direct each edge from a lesser color to a larger one). 

\subsection{$k$-11-representable graphs}\label{k-11-repr-sub}

\noindent
A {\em factor} in a word $w_1w_2\ldots w_n$ is a word $w_iw_{i+1}\ldots w_j$ for $1\leq i\leq j\leq n$. For any word $w$, let $\pi(w)$ be the {\em initial permutation} of $w$ obtained by reading $w$ from left to right and recording the leftmost occurrences of the letters in $w$. Denote by $r(w)$ the {\em reverse} of $w$, that is, $w$ written in the reverse order. Finally, for a pair of letters $x$ and $y$ in a word $w$, let $w|_{\{x,y\}}$ be the subword induced by the letters $x$ and~$y$.  For example, if $w=42535214421$ then $\pi(w)=42531,\ r(w)=12441253524,$ and $w|_{\{4,5\}}=45544$.

Let $k\geq 0$. A graph $G=(V,E)$ is {\em $k$-$11$-representable} if there exists a word $w$ over the alphabet $V$ such that the word $w|_{\{x,y\}}$  contains in total at most $k$ occurrences of the factors in $\{xx,yy\}$ if and only if $xy$ is an edge in $E$. Such a word $w$ is called $G$'s {\em $k$-$11$-representant}. 
Note that $0$-$11$-representable graphs are precisely word-representable graphs, and that $0$-$11$-representants are precisely word-representants.
A graph $G=(V,E)$ is {\em permutationally $k$-$11$-representable} if it has a $k$-$11$-representant that is a concatenation of permutations of $V$.
 The ``11'' in ``$k$-$11$-representable'' refers to counting occurrences of the {\em consecutive pattern} 11 in the word induced by a pair of letters $\{x,y\}$, which is exactly the total number of occurrences of the factors in $\{xx,yy\}$.  

A {\em uniform} (resp., {\em $t$-uniform}) representant of a graph $G$ is a word, satisfying the required properties, in which each letter occurs the same (resp., $t$) number of times. It is known that each word-representable graph has a uniform representant~\cite{KP08}, 
the class of 2-uniformly representable graphs is exactly the class of circle graphs~\cite{KL15}, while the class of $2$-uniformly $1$-$11$-representable graphs is the 
class of interval graphs~\cite{CKKKP2019}.  Interestingly,  2-uniformly representable graphs appear in the literature under the name of ``{\em alternance graph}'', and other names,  in \cite{B72,B94,EQ71,F78,G80} well before the introduction of word-representable graphs; see \cite{B94} for a discussion and more references on alternance graphs.
The main result in~\cite{CKKKP2019} is the following theorem.

\begin{thm}[\cite{CKKKP2019}]\label{begin-end} Every graph $G$ is permutationally $2$-$11$-representable.
\end{thm}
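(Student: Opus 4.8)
The plan is to first recast the statement combinatorially. Write a candidate representant as a concatenation $w=\pi_1\pi_2\cdots\pi_m$ of $m$ permutations of $V$. Then for any two vertices $x,y$ the induced word $w|_{\{x,y\}}$ has the form $a_1b_1a_2b_2\cdots a_mb_m$ with $\{a_i,b_i\}=\{x,y\}$, and a factor from $\{xx,yy\}$ occurs exactly at those positions where $b_i=a_{i+1}$, i.e.\ where $\pi_i$ and $\pi_{i+1}$ order the pair $\{x,y\}$ oppositely; call the number of such positions the number of \emph{switches} of the pair $\{x,y\}$. Thus Theorem~\ref{begin-end} is equivalent to the assertion that for every graph $G$ one can choose permutations $\pi_1,\dots,\pi_m$ of $V$ so that every edge has at most two switches while every non-edge has at least three. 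A handy device is an \emph{identity reset}: fix a reference permutation $\iota$ of $V$; then a short factor that reverses the $\iota$-order of $\{x,y\}$ and is immediately preceded and followed by $\iota$ contributes exactly two switches to $\{x,y\}$ and none to any pair it keeps in $\iota$-order.

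Next I would build $w$ from small gadgets. The basic one is a \emph{local swap}: given a non-edge $xy$, pick a permutation $P$ in which $x$ and $y$ are consecutive, let $P'$ be $P$ with $x$ and $y$ transposed, and insert the factor $P\,P'\,P$. Since transposing two adjacent letters reverses the relative order of no other pair, in isolation this factor yields exactly two switches for $\{x,y\}$ and zero for every other pair; more generally, for a matching $M$ of non-edges a single permutation makes all $M$-pairs consecutive and a single transposition reverses all of them at once. Running such a gadget twice (or interleaving it with an identity reset) pushes the targeted non-edges to at least three switches. If the gadgets could be placed side by side with no interference, concatenating one group of gadgets per non-edge — or, more economically, one per colour class of a proper edge-colouring of $\overline{G}$ — would finish the proof, each non-edge being pushed up by its own gadget and each edge left untouched.

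The difficulty, and the part I expect to be the crux, is the interference at the \emph{junctions} between consecutive gadgets: two permutations realizing local swaps for different non-edges will in general disagree on many other pairs, and each disagreement with the neighbouring block costs a switch, which can push edges past their budget of two. The plan to handle this is to fix one reference order $\iota$, realize every gadget as a controlled perturbation of $\iota$ (so that the ``collateral'' reversals caused by a gadget are confined to a small, explicitly described set of pairs), and then schedule the gadgets — choosing $\iota$, the direction of each local swap, and the processing order of the non-edges (or colour classes) — so that each edge suffers collateral from at most one gadget and hence stays within two switches, while each non-edge is acted on by its own gadget at least twice and hence reaches at least three (indeed four) switches. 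Verifying that the resulting $w$ is a genuine concatenation of permutations of $V$ and re-checking the switch counts are then routine; the real content is the combinatorial argument that the perturbations required by the non-edges can be arranged to be pairwise compatible on the edge set of $G$.
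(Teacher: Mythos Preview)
First, note that the paper does not itself prove this theorem: it is quoted from \cite{CKKKP2019} with no argument given. So there is no ``paper's own proof'' to compare against here; what follows is an assessment of your proposal on its own merits.

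Your reformulation in terms of \emph{switches} is correct and useful: for $w=\pi_1\cdots\pi_m$, the number of occurrences of the pattern $11$ in $w|_{\{x,y\}}$ is exactly the number of indices $i$ at which $\pi_i$ and $\pi_{i+1}$ order $\{x,y\}$ oppositely. With identity resets, a block $\iota\,P\,\iota$ contributes exactly $2$ switches to every pair that $P$ reverses relative to $\iota$ and $0$ to the rest; so a word $\iota\,P_1\,\iota\,P_2\,\iota\cdots\iota\,P_m\,\iota$ gives each pair $\{x,y\}$ exactly $2\cdot|\{i:P_i\text{ reverses }\{x,y\}\}|$ switches. Thus your task reduces cleanly to: \emph{choose permutations $P_1,\dots,P_m$ so that every non-edge is reversed by at least two of them and every edge by at most one}. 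That is a fine target.

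The genuine gap is that you stop precisely there. You explicitly identify ``the real content'' as the combinatorial compatibility/scheduling argument and then do not supply it. In particular, the specific gadget you propose --- an adjacent transposition $P\,P'\,P$ --- cannot be used as stated once $x$ and $y$ are not adjacent in the reference order $\iota$: moving them next to each other in $P$ already reverses many other pairs relative to $\iota$, so the junctions $\iota\to P$ and $P\to\iota$ dump two switches onto each of those collateral pairs, typically including edges. Your remedy (``controlled perturbation of $\iota$'' plus a clever schedule) is exactly the hard part, and nothing in the proposal shows that such a schedule exists for an arbitrary $G$. Concretely, take $G=K_n$ minus a single edge $\{1,n\}$ with $\iota=12\cdots n$: any permutation reversing $\{1,n\}$ must reverse at least $n-2$ other pairs, all of which are edges, and you need two such permutations; you would then have to argue these two sets of collateral edges are disjoint. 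That particular case can be handled (e.g.\ by cycling $1$ to the end versus cycling $n$ to the front), but the general claim that one can always disjointify the collateral over all non-edge gadgets is neither obvious nor argued.

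In short: the reduction is sound, but the proposal is a plan rather than a proof. The construction in \cite{CKKKP2019} proceeds by a direct, vertex-by-vertex recipe for the permutations (in the spirit of the $\Pi_j$ used in Theorem~\ref{split-thm} of this paper) rather than by non-edge gadgets, which sidesteps the scheduling problem you have left open.
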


So, when understanding whether each graph is $k$-$11$-representable for a fixed $k$, the only open case to study is $k=1$.

\subsection{Known tools to study 1-11-representable graphs}\label{known-tools-sec}

\noindent
Clearly, each word-representable graph  is 1-11-representable. Indeed, if $w$ is a word-representant of $G$ then, for instance,  $ww$ or  $r(\pi(w))w$ are its 1-11-representants. There are three types of tools 
for finding 1-11-representable graphs suggested in~\cite{CKKKP2019}:

\begin{itemize}
\item modifying known 1-11-representable graphs;
\item removing edges from word-representable graphs;
\item adding vertices to certain classes of graphs.
\end{itemize}

\begin{figure}
\begin{center}
\begin{tabular}{c c c c c }
\begin{tikzpicture}[scale=0.5]
			
\draw (2,8) node [scale=0.5, circle, draw](node1){$v_1$};
\draw (4,8) node [scale=0.5, circle, draw](node2){$v_{2}$};
\draw (6,8) node [scale=0.5, circle, draw](node3){$v_{3}$};
\draw (2,5) node [scale=0.5, circle, draw](node4){$u_{1}$};
		
\draw (4,5) node [scale=0.5, circle, draw](node5){$u_{2}$};
\draw (6,5) node [scale=0.5, circle, draw](node6){$u_{3}$};
\draw (4,3)node [scale=0.5, circle, draw](node7){$x$};
		
\draw (node1)--(node2) -- (node3);
\draw [bend right=40] (node3) to (node1);
\draw (node1)--(node5);
\draw (node1)--(node6);
		
\draw (node2)--(node4);
\draw (node2)--(node6);
\draw (node3)--(node4);
\draw (node3)--(node5);
\draw (node7)--(node4);
\draw (node7)--(node5);
\draw (node7)--(node6);
			
\end{tikzpicture}
		
&

&
				
\begin{tikzpicture}[scale=0.5]
		
\draw (2,8) node [scale=0.5, circle, draw](node1){$v_1$};
\draw (4,8) node [scale=0.5, circle, draw](node2){$v_{2}$};
\draw (6,8) node [scale=0.5, circle, draw](node3){$v_{3}$};
\draw (8,8) node [scale=0.5, circle, draw](node4){$v_{4}$};
\draw (2,5) node [scale=0.5, circle, draw](node5){$u_{1}$};
		
\draw (4,5) node [scale=0.5, circle, draw](node6){$u_{2}$};
\draw (6,5) node [scale=0.5, circle, draw](node7){$u_{3}$};
\draw (8,5)node [scale=0.5, circle, draw](node8){$u_4$};
\draw (5,2)node [scale=0.5, circle, draw](node9){$x$};
		
\draw (node1)--(node2) -- (node3) -- (node4);
\draw [bend right=30] (node4) to (node1);

\draw (node1)--(node8);
\draw (node4)--(node5);
\draw (node1)--(node6);
\draw (node2)--(node7);
		
\draw (node3)--(node8);
\draw (node2)--(node5);
\draw (node3)--(node6);
\draw (node4)--(node7);
\draw (node9)--(node5);
\draw (node9)--(node6);
\draw (node9)--(node7);
\draw (node9)--(node8);
		
\end{tikzpicture}
		
& 
			
&
		
\begin{tikzpicture}[scale=0.5]
			
\draw (2,8) node [scale=0.5, circle, draw](node1){$v_1$};
\draw (4,8) node [scale=0.5, circle, draw](node2){$v_{2}$};
\draw (6,8) node [scale=0.5, circle, draw](node3){$v_{3}$};
\draw (8,8) node [scale=0.5, circle, draw](node4){$v_{4}$};
			
\draw (10,8) node [scale=0.5, circle, draw](node5){$v_{5}$};
\draw (2,5) node [scale=0.5, circle, draw](node6){$u_{1}$};
\draw (4,5)node [scale=0.5, circle, draw](node7){$u_{2}$};
\draw (6,5)node [scale=0.5, circle, draw](node8){$u_{3}$};
\draw (8,5)node [scale=0.5, circle, draw](node9){$u_{4}$};
\draw (10,5)node [scale=0.5, circle, draw](node10){$u_{5}$};
\draw (6,3)node [scale=0.5, circle, draw](node11){$x$};
		
\draw (node1)--(node2) -- (node3) -- (node4) -- (node5);
\draw [bend right=25] (node5) to (node1);
\draw (node1)--(node7);
\draw (node2)--(node8);
		
\draw (node3)--(node9);
\draw (node4)--(node10);
\draw (node6)--(node2);
\draw (node7)--(node3);
\draw (node8)--(node4);
\draw (node9)--(node5);
\draw (node11)--(node6);
\draw (node11)--(node7);
\draw (node11)--(node8);
\draw (node11)--(node9);
\draw (node11)--(node10);
\draw (node1)--(node10);
\draw (node6)--(node5);
			
\end{tikzpicture}
			
\end{tabular}
\end{center}
\caption{The graphs $\mu(C_3)$, $\mu(C_4)$, and $\mu(C_5)$ }
\label{Mycielski}
	
\end{figure}
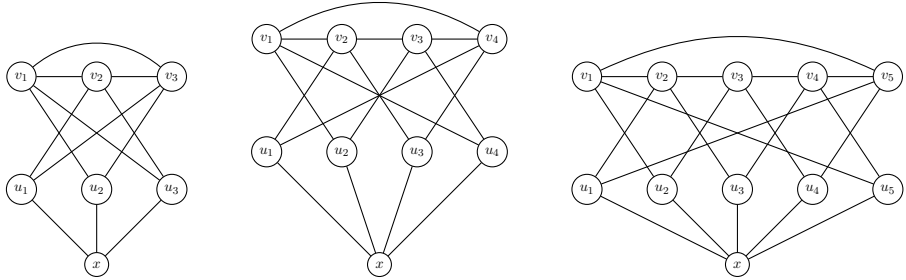

Below we unify all known tools from~\cite{CKKKP2019} into three statements according to their type.

\begin{lem}[\cite{CKKKP2019}]\label{type1-lem}  \ 

(a) Let $G_1$ and $G_2$ be $1$-$11$-representable graphs. Then their disjoint union, glueing them in a vertex or connecting them by an edge results in a $1$-$11$-representable graph.

(b) If $G$ is $1$-$11$-representable then for any edge $xy$ adding a new vertex adjacent to $x$ and $y$ only, gives a $1$-$11$-representable graph.
\end{lem}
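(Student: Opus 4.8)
The plan is to build the required $1$-$11$-representants explicitly by concatenation, exploiting the fact that when two letters $x$ and $y$ come from disjoint parts of the word, the subword $w|_{\{x,y\}}$ is just the concatenation of the two pieces, so one controls the number of $11$-occurrences additively. For part (a), let $w_1$ represent $G_1$ and $w_2$ represent $G_2$. For the \emph{disjoint union} I would first argue that we may assume each $w_i$ \emph{begins and ends with a full permutation of its vertex set} — indeed, replacing $w_i$ by $\pi(r(w_i))\,w_i\,\pi(w_i)$ (or a similar padding) adds a ``$11$'' to no pair $x,y$ that already alternates across the whole of $w_i$ and does not destroy any non-edge's surplus, a normalization I will verify carefully. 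Granting this, consider $w=w_1w_2$. For $x,y$ both in $G_1$ or both in $G_2$ the count is unchanged, and for $x\in V(G_1)$, $y\in V(G_2)$ the subword is $x^{a}y^{b}$ with $a,b\ge 1$, contributing $(a-1)+(b-1)\ge 0$ occurrences; to force this to be $\ge 2$ (a non-edge) I instead take $w=w_1w_1w_2w_2$, or use the boundary permutations of both words to guarantee each of $x$ and $y$ appears at least twice in its block, giving at least $1+1=2$ occurrences. \emph{Glueing in a vertex} $z$: arrange $w_1$ so that $z$ is its last letter and $w_2$ so that $z$ is its first letter, then splice to get $w_1'zw_2'$; pairs internal to one side are fine, and a cross pair $x\in V(G_1)\setminus\{z\}$, $y\in V(G_2)\setminus\{z\}$ is again handled by doubling as above. \emph{Connecting by an edge} $xy$ with $x\in V(G_1)$, $y\in V(G_2)$: take the disjoint-union word and then append or interleave a short controlled factor that makes exactly the pair $x,y$ alternate (at most $k=0$ or $1$ occurrences) while leaving everything else untouched; placing a single copy of $x$ then $y$ at the very end after a doubled disjoint-union word is the kind of gadget I expect to work, and I would check the bookkeeping for all other cross pairs.

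For part (b), suppose $w$ is a $1$-$11$-representant of $G$ and $xy\in E(G)$, and we add a new vertex $v$ adjacent to $x$ and $y$ only. The natural move is to take $w'=v\,w\,v$ or to insert two copies of $v$ flanking a well-chosen occurrence of the factor involving $x$ and $y$. Because $xy$ is an edge, $w|_{\{x,y\}}$ has at most one ``$11$''; I would locate a spot — say immediately before the first letter of $w$ and immediately after the last — and insert $v$ so that $w'|_{\{v,x\}}$ reads $v\,(x\cdots x)\,v$, hence $vxx\cdots v$ or $vx\cdots v$, with at most one internal ``$11$'' plus possibly contributions from the $v$'s, which I arrange to be zero by making $v$ occur exactly twice; the same for $v,y$. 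For a pair $v,u$ with $u\neq x,y$ (a non-edge) the subword is $v\,u^{a}\,v$ with $a\ge 1$, so it has $(a-1)$ internal ``$11$''s; if $a=1$ this is only $0$, which is not enough, so I would instead first pass to a representant in which every letter occurs at least twice (doubling $w$, or using the uniform/$2$-uniform normalization), ensuring $a\ge 2$ and thus at least one ``$11$'' for every non-neighbour of $v$, and then place the two copies of $v$ so that the number of ``$11$''s in $w'|_{\{v,u\}}$ is at least $2$ for all $u\notin\{x,y\}$ and at most $1$ for $u\in\{x,y\}$.

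I expect the main obstacle to be the \emph{simultaneous} control of the cross pairs in part (a) and of the pairs $\{v,u\}$ in part (b): a single insertion of $v$ (or a single concatenation) that makes the two intended edges have few ``$11$''s while guaranteeing \emph{all} the intended non-edges have at least two is delicate, because $v$ sits at the boundary and each $v$--$u$ subword inherits only whatever multiplicity $u$ already had in $w$. The fix is the normalization step — reducing to a representant in which each letter occurs at least twice and, ideally, in which $w$ both starts and ends with a permutation of $V$ — and the real work of the proof is to prove that this normalization is always available without increasing any existing ``$11$'' count past its threshold, and then to do the routine but careful case check (internal pairs vs.\ cross/new pairs, and within cross/new pairs the two intended edges vs.\ everything else). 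Once the normalization is in place, each of the four constructions is a short explicit word together with a two-line verification.
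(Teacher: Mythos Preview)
The paper does not prove this lemma; it is quoted from \cite{CKKKP2019} with no argument given, so there is no in-paper proof to compare your proposal against.

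Independently of that, your outline has a genuine gap at the normalization step on which everything else rests. The devices you propose for normalizing a $1$-$11$-representant $w$ --- replacing it by $\pi(r(w))\,w\,\pi(w)$, or doubling to $ww$ --- do \emph{not} in general preserve $1$-$11$-representation. If $xy$ is an edge realized with exactly one occurrence of the pattern $11$ in $w|_{\{x,y\}}$, then $(ww)|_{\{x,y\}}$ already contains at least two such occurrences (one inside each copy, before even counting the seam), so the edge is lost; prepending or appending a permutation tailored to first/last occurrences typically creates an extra $11$ at the junction for precisely such pairs as well. These paddings are safe when $w$ is a $0$-$11$ (word-) representant --- which is how the present paper uses $r(\pi(w))w$ --- but here you are only given a $1$-$11$-representant, and that is the whole point of the lemma.

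The same issue bites harder in part (b). With $w'=v\,w\,v$ you get $w'|_{\{v,u\}}=v\,u^{a}\,v$, which has exactly $a-1$ occurrences of $11$. For $vu$ to be a non-edge you therefore need $a\ge 3$, while for $vx$ and $vy$ to be edges you need the multiplicities of $x$ and $y$ in $w$ to be at most $2$. So your scheme requires a $1$-$11$-representant in which $x$ and $y$ each occur at most twice and every other vertex at least three times, and you give no mechanism for producing such a word from an arbitrary $1$-$11$-representant (and, as above, naive doubling destroys edges). Inserting $v$ only at the two extremes cannot meet both constraints simultaneously; a working construction must place copies of $v$ at interior positions chosen relative to specific occurrences of $x$ and $y$, and that placement --- together with a valid uniformization for $1$-$11$-representants --- is the missing idea.
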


\begin{lem}[\cite{CKKKP2019}]\label{type2-lem} Let $G$ be  a word-representable graph, $A\subseteq V$ and $v\in V$. Then

(a) $G\setminus \{xy\in E(G)\ |\ x,y\in A\}$ is a $1$-$11$-representable graph;

(b) $G\setminus \{uv\in E(G)\ |\ u\in N_A(v)\}$ is a $1$-$11$-representable graph.
\end{lem}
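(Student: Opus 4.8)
The plan is to start, in both parts, from a word-representant $w$ of $G$ and from the observation already recorded above that $r(\pi(w))\cdot w$ is a $1$-$11$-representant of $G$. In that word every edge $xy$ of $G$ contributes \emph{exactly one} occurrence of a factor from $\{xx,yy\}$ — created at the junction between $r(\pi(w))$ and $w$, because $r(\pi(w))|_{\{x,y\}}$ ends with the first letter of $w|_{\{x,y\}}$ — while every non-edge contributes at least two. So to represent the graph obtained by deleting a prescribed set of edges it suffices to modify this word so as to raise the count of every deleted edge from $1$ to at least $2$, without raising the count of any surviving edge above $1$.

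For part (a), write $\pi_A$ for the restriction of $\pi(w)$ to $A$ and try
\[
w' \;=\; \pi_A\cdot r(\pi(w))\cdot w .
\]
One then checks three kinds of pairs $\{x,y\}$. If $x,y\in A$, the prefix $\pi_A$ contributes the two letters $x,y$ in $\pi(w)$-order, and together with $r(\pi(w))|_{\{x,y\}}$ these form a factor of shape $abba$ carrying one extra equal pair; combined with the junction pair already present in $r(\pi(w))\cdot w$ this gives at least two occurrences, so $xy$ becomes a non-edge, exactly as required. If exactly one of $x,y$, say $x$, lies in $A$, then $\pi_A$ only inserts one extra copy of $x$ at the very front, and one argues this does not disturb the count of an edge $xy$ with $y\notin A$ (which must survive). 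If neither is in $A$, then $w'|_{\{x,y\}}=(r(\pi(w))\cdot w)|_{\{x,y\}}$ and there is nothing to check. Part (b) is the analogous but lighter construction $w'=v\cdot r(\pi(w))\cdot w$: prepending the single letter $v$ only affects pairs $\{v,z\}$, and one wants the extra copy of $v$ to collide with the copy of $v$ that $r(\pi(w))$ places next to $w$ precisely when $z\in A$ (killing $vz$) and not when $z\notin A$ (preserving $vz$), while all pairs avoiding $v$ are unchanged from $r(\pi(w))\cdot w$.

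The step I expect to be the main obstacle is exactly the mixed pairs in (a) and, correspondingly, the pairs $\{v,z\}$ in (b): whether the prepended block creates a \emph{second} equal pair for $\{x,y\}$ is governed by whether $x$ precedes or follows $y$ in $\pi(w)$, so the naive prefixes above work only when $\pi(w)$ orders the relevant vertices favourably — in (a), when every vertex of $A$ precedes every vertex of $V\setminus A$; in (b), when $N_A(v)$ precedes $v$ which precedes $N(v)\setminus A$. To get the lemma in general one therefore has to either first replace $w$ by a word-representant of $G$ whose initial permutation has the required order property (and prove such a representant exists, e.g.\ by suitably reordering a semi-transitive orientation), or repair the ``wrong-order'' pairs with a symmetric auxiliary block appended at the \emph{end} of the word, this time controlled by the permutation of \emph{last} occurrences in $w$ rather than by $\pi(w)$. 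Verifying that this end block does not itself spoil any surviving edge is the real content of the argument; once the ordering issues are settled, the case analysis above is routine. (Note also that attacking (a) by repeatedly applying (b) does not work directly, since the intermediate graphs need no longer be word-representable.)
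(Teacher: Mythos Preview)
The lemma itself is only cited here (from \cite{CKKKP2019}); the paper does not reprove it verbatim. What the paper \emph{does} prove is Theorem~\ref{new-tool-thm}, which contains part~(a) as the special case $k=1$, $V_1=A$, and whose proof is the natural benchmark for your proposal.

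Your diagnosis of the obstacle is exactly right, but you do not overcome it, and the paper's construction shows how to bypass it cleanly. In your word $\pi_A\,r(\pi(w))\,w$, a surviving edge $xy$ with $x\in A$, $y\notin A$, and $y$ preceding $x$ in $\pi(w)$ gives
\[
w'|_{\{x,y\}}=x\cdot xy\cdot yxyx\cdots,
\]
creating both an $xx$ and a $yy$: the edge is destroyed even though you wanted to keep it. Your suggested repair~1 --- replace $w$ by a representant whose initial permutation puts all of $A$ first --- is not available in general: initial permutations of (uniform) word-representants are topological orders of semi-transitive orientations, and an arbitrary $A$ need not be an initial segment of any such order. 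Repair~2 (a compensating block at the end governed by last occurrences) is plausible but you do not carry it out, and the bookkeeping is exactly what would constitute the proof.

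The paper sidesteps all of this by prepending \emph{two full permutations} rather than the partial word $\pi_A$: with $R=V\setminus A$ and blocks ordered as in $\pi(w)$, set $\Pi_1=AR$ and $\Pi_2=r(A)\,R$, and take $W=\Pi_1\Pi_2\,\pi(w)\,ww$ with $w$ uniform. The point is that $\Pi_1$ and $\Pi_2$ \emph{agree} on every mixed pair (both place $A$ before $R$) and disagree only on pairs inside $A$; Proposition~\ref{three-perms-prop} then gives at most one $11$ in $\Pi_1\Pi_2\pi(w)$ for every surviving edge regardless of where $x$ and $y$ sit in $\pi(w)$, while forcing two $11$'s for pairs inside $A$. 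This is precisely the ``ordering independence'' you were trying to manufacture with an end block; doing it with a second prepended permutation is both shorter and avoids any auxiliary existence argument. For part~(b) the same idea specialises: rather than prepend the single letter $v$ (which, as you note, only kills $vz$ when $z$ precedes $v$), one can prepend two permutations that agree except on the relative order of $v$ and $N_A(v)$.
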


\begin{lem}[\cite{CKKKP2019}]\label{type3-lem} Let $G$ be a graph with a vertex $v$.  $G$ is $1$-$11$-representable if at least one of the following conditions holds:

(a) $G\setminus v$ is a comparability graph;

(b) $G\setminus v$ is a circle graph.
\end{lem}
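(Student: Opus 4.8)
The plan is to prove Lemma~\ref{type3-lem} by giving, in each of the two cases, an explicit $1$-$11$-representant of $G$ built from a good representant of $G\setminus v$ together with a short gadget that attaches $v$ with the correct edge/non-edge pattern. The common idea is that a vertex $v$ contributes a pair $vv$ (a single occurrence of the consecutive pattern $11$) to exactly the pairs $\{v,u\}$ that we want to be non-edges if we can keep every copy of $v$ adjacent in $w$ except at one spot, and contributes no occurrence of $11$ to the pairs $\{v,u\}$ we want to be edges; so we must arrange that, for every neighbour $u$ of $v$, the letter $v$ and the letter $u$ alternate, while for every non-neighbour $u$ the induced word is essentially $vvP$ or $PvvQ$ for some alternating remainder, giving exactly one $11$. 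The background facts we will lean on are Theorem~\ref{comp-thm} (a comparability graph has a permutational word-representant $P_1P_2\cdots P_m$, a concatenation of permutations of its vertex set) and the remark that a word-representable graph has a uniform word-representant, which we may reverse and prepend as $r(\pi(w))w$ without changing the represented graph.

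For part~(a), suppose $G\setminus v = H$ is a comparability graph on vertex set $V\setminus\{v\}$, and let $w_H = P_1P_2\cdots P_m$ be a permutational representant of $H$, where each $P_i$ is a permutation of $V\setminus\{v\}$. Write $N = N(v)$ for the neighbours of $v$ in $G$ and $\bar N = V\setminus(\{v\}\cup N)$ for the non-neighbours. The construction I would use is to insert the letter $v$ into each permutation block in a location chosen so that $v$ ends up immediately to the left of the same reference point every time — e.g., place $v$ at the very front of every block $P_i$. Then the word $v P_1 v P_2 \cdots v P_m$ has each $v$ followed by a permutation of all other letters, so for every $u$ the induced word on $\{v,u\}$ is $vu\,vu\cdots vu$, i.e.\ $v$ alternates with everybody; this correctly represents all the edges from $v$ but wrongly makes every non-neighbour an edge too. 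To fix the non-neighbours we reorder the letters inside the blocks so that all of $\bar N$ appears before $v$ in every block while all of $N$ appears after $v$; this is possible without disturbing the graph $H$ because within a single permutation block the internal order is arbitrary. Now for $u\in N$ the induced subword on $\{v,u\}$ is still $vu\,vu\cdots$, an alternation, so $vu$ is an edge; for $u\in\bar N$ the induced subword is $u v\, u v\cdots u v$, which is also an alternation — so this naive version still fails. The correct move is therefore to use two differently-ordered copies: form $w = w_1 w_2$ where $w_1$ lists $v$ then all of $V\setminus\{v\}$ once (a single permutation starting with $v$), and $w_2 = Q_1 Q_2\cdots Q_m$ where each $Q_i$ is the permutation $P_i$ rearranged to put $\bar N$ first, then $v$, then $N$. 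On $\{v,u\}$ with $u\in N$, $w_1$ contributes $vu$ and $w_2$ contributes $(vu)^m$, giving $(vu)^{m+1}$ — an alternation, zero $11$s, an edge; on $\{v,u\}$ with $u\in\bar N$, $w_1$ contributes $vu$ and $w_2$ contributes $(uv)^m$, so the induced word is $v\,u\,u\,v\,u\,v\cdots$, whose only $11$ is the single $uu$ at the junction — one occurrence, a non-edge; on $\{u,u'\}$ both distinct from $v$, $w_1$ contributes one alternating pair and $w_2$ contributes $w_H|_{\{u,u'\}}$ which is already alternating iff $uu'\in E(H)$, and prepending one consistent pair keeps the alternation status, so the edges of $H$ are preserved and its non-edges still have at least one $11$. (The one routine point to verify is that prepending a single globally-consistent permutation $w_1$ never converts a non-alternation into an alternation — it cannot, since it only adds one letter of each type at the far left in a fixed order — and this is exactly the standard "$r(\pi(w))w$" trick.)

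For part~(b), suppose $G\setminus v = H$ is a circle graph; then $H$ has a $2$-uniform word-representant $w_H$, a word in which every letter of $V\setminus\{v\}$ occurs exactly twice and two letters alternate in $w_H$ iff they are adjacent in $H$. I would again insert two copies of $v$ into $w_H$ and argue via the chord/trace picture: a circle graph's representing word corresponds to a set of chords, each letter's two occurrences being the two endpoints of its chord on the circle, and $u,u'$ cross iff their endpoints interleave. Adding the vertex $v$ with prescribed neighbourhood $N$ amounts to adding a new chord that must cross exactly the chords in $N$; in general no single chord does this, so instead we exploit the extra freedom $1$-$11$-representation gives: we are allowed for each non-neighbour $u\in\bar N$ to have the induced word on $\{v,u\}$ contain one occurrence of $11$, rather than being forced to be non-alternating as a trace of chords. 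Concretely, I would place the two copies of $v$ so that between them lies exactly one occurrence of every $u\in N$ and either zero or two occurrences of every $u\in\bar N$: then for $u\in N$ the induced word is $v\,u\,v\,u$ or $u\,v\,u\,v$ (alternation, edge), while for $u\in\bar N$ with both occurrences inside it is $v\,u\,u\,v$ (one $11$, non-edge) and for $u\in\bar N$ with neither occurrence inside it is $u\,u\,v\,v$ or $v\,v\,u\,u$ (again one $11$, non-edge). Such a placement exists because the two occurrences of each letter of $H$ are at fixed positions and we simply choose the two insertion slots for $v$ to "cut" the cyclic word between a consecutive pair of positions in a way that separates the single occurrences we want from the paired ones we do not; after choosing any slot for the first copy of $v$, the constraint that each $u\in N$ be "half-inside" and each $u\in\bar N$ be "fully inside or fully outside" determines an interval of admissible slots for the second copy, and one checks this interval is non-empty by a parity/counting argument on how the chosen first slot interleaves the endpoints. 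The pairs $\{u,u'\}$ not involving $v$ keep their alternation status because inserting $v$ twice does not change the relative order of the other letters.

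The main obstacle in both parts is the last combinatorial claim — that the required insertion of the two copies of $v$ actually exists given an \emph{arbitrary} prescribed neighbourhood $N$. In case~(a) this is easy because permutation blocks can be reordered internally at will, so I expect that part to be essentially bookkeeping. Case~(b) is the delicate one: a $2$-uniform word for $H$ is rigid (we cannot permute occurrences freely), so the real content is showing that among the cyclic "gaps" of $w_H$ there is always a pair of gaps whose removal separates the single-occurrence set for $N$ from the double-or-zero-occurrence requirement for $\bar N$. If a direct gap-counting argument is awkward, the fallback is to not insist on $2$-uniformity of the final word: take $w_H$ as a (non-uniform) word-representant of the circle graph $H$, then use the part~(a)-style device of appending a suitably ordered permutation of $V\setminus\{v\}$ to normalise things, at the cost of a longer but still valid $1$-$11$-representant. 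I would present case~(a) in full and then either give the clean gap argument for case~(b) or reduce case~(b) to the same mechanism as case~(a).
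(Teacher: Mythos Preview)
First, note that the paper does not prove this lemma at all: it is quoted from \cite{CKKKP2019} and used as a black box. So there is no ``paper's own proof'' to compare against, and I evaluate your argument on its own merits.

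There is a basic definitional slip that undermines both parts. In $1$-$11$-representation, $xy$ is an \emph{edge} iff $w|_{\{x,y\}}$ has \emph{at most one} occurrence of the pattern $11$; a non-edge requires \emph{at least two} such occurrences. You repeatedly treat ``exactly one $11$'' as certifying a non-edge (e.g.\ ``the induced word is $v\,u\,u\,v\,u\,v\cdots$, whose only $11$ is the single $uu$ at the junction --- one occurrence, a non-edge''; and in~(b), ``$v\,u\,u\,v$ (one $11$, non-edge)''). That is word-representation ($k=0$), not $1$-$11$-representation. In particular your entire part~(b) scheme collapses: inserting two copies of $v$ into a $2$-uniform word gives, for every $u$, a $4$-letter subword $w|_{\{v,u\}}$, which can contain at most one $11$; hence \emph{every} vertex would be adjacent to $v$ regardless of where you place the two $v$'s. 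This is why the paper needs $r(\pi(w))w$ or $ww$ even to turn a word-representant into a $1$-$11$-representant: you must manufacture a second $11$ for each non-edge.

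Part~(a) has a second, independent error. You form $Q_i$ from $P_i$ by pushing all of $\bar N$ to the front and all of $N$ to the back, and then assert that $w_2|_{\{u,u'\}}=w_H|_{\{u,u'\}}$ for $u,u'\neq v$. That holds only when $u,u'$ lie in the same block ($\bar N$ or $N$). If $u\in\bar N$ and $u'\in N$, your reordering forces $u$ before $u'$ in \emph{every} $Q_i$, so $w_2|_{\{u,u'\}}=(uu')^m$ is perfectly alternating and $uu'$ becomes an edge whether or not it was one in $H$. The sentence ``this is possible without disturbing the graph $H$ because within a single permutation block the internal order is arbitrary'' is simply false: the internal order of each $P_i$ is exactly what encodes the edges of $H$.

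A repair for~(a) must produce at least two $11$'s for every non-neighbour of $v$ while leaving the $P_i$ intact; one clean way is to prepend \emph{two} carefully chosen permutations (not one), in the spirit of Proposition~\ref{three-perms-prop}, so that the pair $\{v,u\}$ with $u\in\bar N$ picks up two $11$'s from the prefix and all other pairs pick up at most one --- indeed, the paper's own Theorem~\ref{ind+comp} (with $B=\{v\}$) gives exactly such a construction and subsumes case~(a). For~(b) you cannot stay $2$-uniform; the original argument in \cite{CKKKP2019} builds a longer word from the $2$-uniform representant of $H$, and your proposed ``fallback'' of abandoning $2$-uniformity is closer to what is actually needed, but you would still have to redo the counting with the correct edge/non-edge threshold.
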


Note that the tool in Lemma~\ref{type3-lem}(b) (that is a partial case of Theorem 2.7 in \cite{CKKKP2019} for $k=2$) looks to be the strongest one. For instance, it allows to establish 1-11-representability of such known non-word-representable graphs as odd wheels. In the next statement we use it to prove a new result on 1-11-representability of $\mu(C_n)$. Note 
that $\mu(C_n)$ is conjectured to be non-word-representable for all odd $n\ge 3$, and it is known that the conjecture is true for $\mu(C_5)$ \cite{KP23}. 

\begin{prop}
The Mycielski graphs $\mu(C_n)$ are $1$-$11$-representable for all $n\ge 3$.
\end{prop}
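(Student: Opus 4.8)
The plan is to delete the apex vertex $x$ and show that $\mu(C_n)\setminus x$ is a circle graph; the statement then follows immediately from Lemma~\ref{type3-lem}(b). Recall that $\mu(C_n)\setminus x$ is the cycle $C_n$ on $\{v_1,\dots,v_n\}$ together with an independent set $\{u_1,\dots,u_n\}$, where $u_i$ is adjacent to exactly $v_{i-1}$ and $v_{i+1}$, all indices taken modulo $n$. Since, by~\cite{KL15}, the circle graphs are precisely the graphs admitting a $2$-uniform word-representant, it suffices to produce a word on the alphabet $\{v_1,\dots,v_n,u_1,\dots,u_n\}$ in which each letter occurs exactly twice and in which two letters alternate if and only if the corresponding vertices are adjacent in $\mu(C_n)\setminus x$.

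I would take the word $w$ obtained by concatenating the $n$ blocks $v_k\,u_{k-1}\,u_k\,v_{k-1}$ for $k=1,\dots,n$ (all indices modulo $n$), that is,
$$ w=(v_1\,u_n\,u_1\,v_n)(v_2\,u_1\,u_2\,v_1)(v_3\,u_2\,u_3\,v_2)\cdots(v_n\,u_{n-1}\,u_n\,v_{n-1}). $$
Each letter occurs exactly twice: $v_i$ appears in blocks $i$ and $i+1$, and so does $u_i$. The idea behind this choice is that the $v$-letters alone (after deleting all occurrences of the $u$'s) form a $2$-uniform representant of $C_n$, in which $v_i$ and $v_j$ alternate precisely when $i\equiv j\pm1$; each $u_i$ is then inserted so that its two copies straddle one copy each of $v_{i-1}$ and of $v_{i+1}$ while staying nested between the two copies of $v_i$, and so that both copies of $u_i$ occupy only the tail of block $i$ and the head of block $i+1$.

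It then remains to verify the alternation pattern, which is essentially mechanical once the word is in hand. Whenever the two letters have their occurrences confined to disjoint pairs of consecutive blocks --- this happens for $v_i,v_j$, for $u_i,v_j$, and for $u_i,u_j$ as soon as the indices differ by at least $2$ modulo $n$ --- the induced subword on these two letters has the form $xxyy$ or $xyyx$, so they do not alternate, matching a non-edge; the pair $u_i,v_i$ also falls into this non-alternating case because $w|_{\{u_i,v_i\}}=v_i\,u_i\,u_i\,v_i$. For consecutive indices one reads off directly from the blocks that $v_i$ and $v_{i+1}$ alternate, that $u_i$ alternates with each of $v_{i-1}$ and $v_{i+1}$, and that $w|_{\{u_i,u_{i+1}\}}=u_i\,u_i\,u_{i+1}\,u_{i+1}$, so $u_i$ and $u_{i+1}$ do not alternate; this is exactly the edge/non-edge structure of $\mu(C_n)\setminus x$, including the independence of $\{u_1,\dots,u_n\}$, and the small cases $n=3,4$ are covered by the same word. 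The only delicate point of the whole argument is the placement of the two copies of each $u_i$: they must simultaneously produce the alternations with $v_{i-1}$ and $v_{i+1}$, avoid an alternation with $v_i$, and avoid alternations with $u_{i-1}$ and $u_{i+1}$, and the block order $v_k\,u_{k-1}\,u_k\,v_{k-1}$ is precisely what accomplishes all of this at once.
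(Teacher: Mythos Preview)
Your proof is correct and follows essentially the same approach as the paper: delete the apex vertex $x$, exhibit a $2$-uniform word-representant of $\mu(C_n)\setminus x$ built from blocks $v_k u_{k-1} u_k v_{k-1}$, and invoke Lemma~\ref{type3-lem}(b). Your word is in fact just a cyclic shift of the paper's word (the paper starts the concatenation at $k=2$ rather than $k=1$), and your verification of the alternation pattern is somewhat more detailed than the paper's, but the argument is the same.
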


\begin{proof}
By Lemma~\ref{type3-lem}(b) it is sufficient to show that the graph $\mu(C_n)\setminus x$ is a circle graph, i.e.\ that it is 2-uniformly representable. It is easy to check that the following 2-uniform word represents $\mu(C_n)\setminus x$:
$$v_2u_1u_2v_1v_3u_2u_3v_2v_4\ldots v_iu_{i-1}u_iv_{i-1}v_{i+1}u_iu_{i+1}v_i\ldots v_nu_{n-1}u_nv_{n-1}v_1u_nu_1v_n.$$
Indeed, it is easy to see that the 2-uniform word $v_2v_1v_3v_2\ldots v_nv_{n-1}v_1v_n$ represents the cycle $C_n$. The $u$'s are inserted into this word in such a way that between two copies of $u_i$ one finds only $v_{i-1}$ and $v_{i+1}$ for every $i$ (including the cyclical shifts of the word with the indices $0=n$ and $n+1=1$).  So, $N(u_i) =\{ v_{i-1}, v_{i+1}\}= N_{C_n}(v_i)$, as required.

\end{proof}

\begin{figure}

\begin{center}

\begin{tabular}{ccc}

\begin{tikzpicture}[node distance=1cm,auto,main node/.style={circle,draw,inner sep=2.5pt,minimum size=4pt}]

\node[main node] (9) {{\tiny 9}};
\node[main node] (8) [left of=9] {{\tiny 8}};
\node[main node] (10) [below right of=9] {{\tiny 10}};
\node[main node] (7) [below left of=8] {{\tiny 7}};
\node[main node] (6) [below of=7] {{\tiny 6}};
\node[main node] (5) [below right of=6] {{\tiny 5}};
\node[main node] (11) [below of=10] {{\tiny 11}};
\node[main node] (12) [below left of=11] {{\tiny 12}};
\node[main node] (2) [above left of=8,xshift=-1cm,yshift = 0.2cm] {{\tiny 2}};
\node[main node] (3) [above right of=9,xshift=1cm,yshift = 0.2cm] {{\tiny 3}};
\node[main node] (1) [below left of=5,xshift=-1cm,yshift = -0.2cm] {{\tiny 1}};
\node[main node] (4) [below right of=12,xshift=1cm,yshift = -0.2cm] {{\tiny 4}};

\path
(5) edge (6)
     edge (12)
     edge (9);
\path
(7) edge (6)
     edge (8)
     edge (11);
\path
(9) edge (8)
     edge (10);
\path
(11) edge (10)
     edge (12)
     edge (7);
\path
(8) edge (12);
\path
(6) edge (10);

\path
(1) edge (2)
     edge (4)
     edge (12)
     edge (7);

\path
(3) edge (2)
     edge (4)
     edge (11)
     edge (8);
\path
(2) edge (6)
     edge (9);

\path
(4) edge (5)
     edge (10);

\end{tikzpicture}

&
\ \ \
&

\begin{tikzpicture}[node distance=1cm,auto,main node/.style={circle,draw,inner sep=2.5pt,minimum size=4pt}]

\node[main node] (9) {{\tiny 9}};
\node[main node] (8) [left of=9] {{\tiny 8}};
\node[main node] (10) [below right of=9] {{\tiny 10}};
\node[main node] (7) [below left of=8] {{\tiny 7}};
\node[main node] (6) [below of=7] {{\tiny 6}};
\node[main node] (5) [below right of=6] {{\tiny 5}};
\node[main node] (11) [below of=10] {{\tiny 11}};
\node[main node] (12) [below left of=11] {{\tiny 12}};
\node[main node] (2) [above left of=8,xshift=-1cm,yshift = 0.2cm] {{\tiny 2}};
\node[main node] (3) [above right of=9,xshift=1cm,yshift = 0.2cm] {{\tiny 3}};
\node[main node] (1) [below left of=5,xshift=-1cm,yshift = -0.2cm] {{\tiny 1}};
\node[main node] (4) [below right of=12,xshift=1cm,yshift = -0.2cm] {{\tiny 4}};

\path
(1)  [->,>=stealth', shorten >=1pt] edge (12);
\path
(2)  [->,>=stealth', shorten >=1pt] edge (1);
\path
(3)  [->,>=stealth', shorten >=1pt] edge (1)
       edge (2)
       edge (8)
       edge (11);
\path       
(4)  [->,>=stealth', shorten >=1pt] edge (1)
       edge (2)
       edge (3)
       edge (5)
       edge (10);
\path
(5)  [->,>=stealth', shorten >=1pt] edge (12);
\path
(6)  [->,>=stealth', shorten >=1pt] edge (2)
       edge (5)
       edge (7)
       edge (10);
\path
(7)  [->,>=stealth', shorten >=1pt] edge (1)
       edge (8)
       edge (11);
\path
(8)  [->,>=stealth', shorten >=1pt] edge (12);
\path
(9)  [->,>=stealth', shorten >=1pt] edge (2)
       edge (5)
       edge (8)
       edge (10);
\path
(10)  [->,>=stealth', shorten >=1pt] edge (11);
\path
(11)  [->,>=stealth', shorten >=1pt] edge (12);

\end{tikzpicture}

\end{tabular}
\end{center}

\caption{The Chv\'{a}tal graph (to the left) and a semi-transitive orientation of the Chv\'{a}tal graph extended by the edges 31 and 42 (to the right)}\label{Chvatal-graph}
\end{figure}
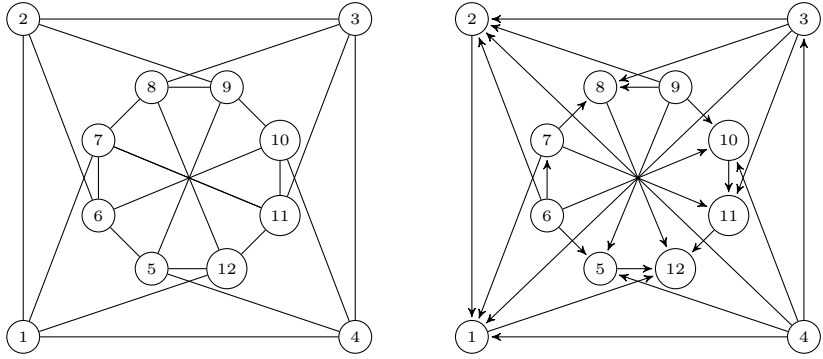

\section{New tools to study 1-11-representation of graphs and their applications}\label{new-tool-sec}

\noindent
Our first tool (Theorem~\ref{new-tool-thm} below) is a far-reaching generalization of Lemma~\ref{type2-lem}. We begin with the following easy observation.

\begin{prop}\label{three-perms-prop} Let $\Pi_1$, $\Pi_2$, $\Pi_3$ be three permutations over $[n]=\{1,\ldots,n\}$. Then the word $w=\Pi_1\Pi_2\Pi_3$ permutationally $1$-$11$-represents the graph with the vertex set $[n]$ in which $x$ and $y$ are not connected by an edge if and only if  in  $\Pi_1$ and $\Pi_3$, $x$ and $y$ are in the same relative order, while in $\Pi_2$ they are in the opposite order.\end{prop}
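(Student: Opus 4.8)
The statement concerns a word $w=\Pi_1\Pi_2\Pi_3$ that is a concatenation of three permutations of $[n]$, so each letter $x$ occurs exactly three times in $w$. The plan is to analyze, for an arbitrary pair of distinct letters $x,y$, the induced subword $w|_{\{x,y\}}$, which has length $6$, and count the occurrences of the factors in $\{xx,yy\}$ as a function of the relative orders of $x$ and $y$ in $\Pi_1$, $\Pi_2$, $\Pi_3$. Then I would match this count against the threshold: at most $1$ such factor means ``edge'', at least $2$ means ``non-edge''.

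First I would fix notation: say $x <_i y$ if $x$ precedes $y$ in $\Pi_i$, and record the pattern as a triple of signs. By symmetry (swapping the roles of $x$ and $y$ just reflects all three comparisons simultaneously, and reversing $w$ — which also swaps $\Pi_1$ with $\Pi_3$ — does not change the count of 11-factors), it suffices to consider the patterns up to this symmetry; there are $2^3=8$ triples, falling into essentially four cases. In each case I write out the 6-letter word $w|_{\{x,y\}}$ explicitly. For instance, if the order is the same in all three permutations, say $x$ before $y$ each time, the induced word is $xyxyxy$, which is alternating and contains $0$ factors from $\{xx,yy\}$: an edge. If $\Pi_1$ and $\Pi_2$ agree but $\Pi_3$ disagrees (order $x,y$; $x,y$; $y,x$), the word is $xyxyyx$, containing exactly one $yy$: still an edge. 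The two ``bad'' cases are: (i) $\Pi_1$ and $\Pi_3$ agree with each other but disagree with $\Pi_2$ — say $x,y$; $y,x$; $x,y$ — giving $xyyxxy$, which has one $yy$ and one $xx$, i.e.\ $2$ factors: a non-edge; and (ii) all three pairwise orders are ``spread out'' in a way that forces at least two blocks — e.g.\ $x,y$; $y,x$; $y,x$ gives $xyyxyx$, one $yy$, total $1$, an edge; while $y,x$; $x,y$; $x,y$ gives $yxxyxy$... I would simply tabulate all eight and confirm that the count is $\le 1$ in exactly six cases (all the patterns that are \emph{not} ``$\Pi_2$ opposite to both $\Pi_1$ and $\Pi_3$'') and equals $2$ in exactly the remaining two cases, which are precisely ``$x,y$ in the same relative order in $\Pi_1$ and $\Pi_3$ but the opposite order in $\Pi_2$.''

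The key combinatorial observation that makes the count clean is this: in a word of the form $a_1a_2a_3$ over $\{x,y\}$ built by reading the three two-letter blocks, a factor $xx$ or $yy$ occurs at the junction between block $i$ and block $i+1$ exactly when the last letter of block $i$ equals the first letter of block $i+1$, which — since each block is a permutation of $\{x,y\}$ — happens exactly when blocks $i$ and $i+1$ have \emph{opposite} relative order. So the total number of $\{xx,yy\}$-factors equals the number of ``descents'' in the sequence of orientations: it is $2$ iff the middle orientation differs from both the first and the third, and $\le 1$ otherwise. This is essentially the whole proof; there is no real obstacle, only the bookkeeping of translating ``$\le 1$ vs.\ $\ge 2$'' through this descent count and then invoking the definition of permutational $1$-$11$-representation. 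The only mild care needed is to note that two adjacent blocks with opposite order contribute exactly one repeated factor (not two) — e.g.\ $xy\cdot yx = xyyx$ has a single $yy$ — and that non-adjacent junctions are the only places such factors can arise, since within a single block $xy$ or $yx$ there is no repetition.
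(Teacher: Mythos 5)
Your proposal is correct and takes essentially the same approach as the paper: fix a pair $\{x,y\}$, reduce to the four cases up to swapping $x$ and $y$, write out the $6$-letter induced word, and observe that only the pattern $xyyxxy$ (i.e.\ $\Pi_2$ opposite to both $\Pi_1$ and $\Pi_3$) produces two occurrences of $11$. Your ``junction/descent'' observation is a tidy way to organize the same bookkeeping the paper does by direct enumeration.
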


\begin{proof} We may assume that $x<y$ in $\Pi_1$. Then the word $w|_{\{x,y\}}$ is either one of $xyxyxy,\ xyxyyx,\ xyyxyx$ (then $xy$ is an edge) or $xyyxxy$ (then $x$ and $y$ are not adjacent).  \end{proof}

In the proof of the next theorem, and in other places in the rest of the paper, for convenience, we slightly abuse the notation by denoting a set $A$ and a certain permutation of elements in $A$ by the same letter. This will not cause any confusion. 

\begin{thm}\label{new-tool-thm} Let $V_1,\ldots,V_k$ be pairwise disjoint subsets of $[n]$, the set of vertices of a word-representable graph $G$. We denote by $E(V_i)$ the set of all edges of $G$ having both end-points in $V_i$.  Then, the graph $H=G\backslash(\cup_{1\leq i\leq k} E(V_i))$, obtained by removing all edges belonging to $E(V_i)$ for all $1\leq i\leq k$, is $1$-$11$-representable. \end{thm}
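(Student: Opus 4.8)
The plan is to build on Theorem~\ref{semi-trans-thm} and Proposition~\ref{three-perms-prop}. Since $G$ is word-representable, it admits a semi-transitive orientation; fix one, and let $u_1, u_2, \ldots, u_n$ be a topological ordering of the vertices consistent with it (i.e.\ every arc goes from a smaller index to a larger one). The key idea is to produce a word of the form $w = \Pi_1 \Pi_2 \Pi_3$, a concatenation of three permutations, such that by Proposition~\ref{three-perms-prop} the non-edges of the represented graph are exactly the pairs $x,y$ that agree in order in $\Pi_1$ and $\Pi_3$ but are reversed in $\Pi_2$. I want this ``reversed-in-$\Pi_2$-only'' set to be precisely $\cup_i E(V_i)$, while every other pair either keeps or loses its adjacency status consistently with $H$.

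The main construction step: let $\Pi_1$ be the topological order $u_1 u_2 \cdots u_n$, and let $\Pi_3 = \Pi_1$, so that a priori every pair is in the same relative order in $\Pi_1$ and $\Pi_3$. Now within each block $V_i$, reverse the relative order of its elements when forming $\Pi_2$, while keeping all elements \emph{not} inside a common $V_i$ in their $\Pi_1$-order. Concretely, $\Pi_2$ is obtained from $\Pi_1$ by, for each $i$, locating the positions occupied by $V_i$ and writing the elements of $V_i$ there in reverse topological order. Because the $V_i$ are pairwise disjoint, this is well-defined. With this choice: a pair $x,y$ that lies in some common $V_i$ is reversed in $\Pi_2$ only, hence becomes a non-edge in the represented graph — exactly as required by the definition of $H$. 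A pair $x,y$ not contained in any single $V_i$ keeps the same order in all three permutations, so by Proposition~\ref{three-perms-prop} such a pair is an edge of the represented graph (the induced word is $xyxyxy$-type). But $H$ should have $xy$ as an edge iff $G$ does, which need not be the case for an arbitrary permutation word. So the naive three-permutation word is not enough on its own: it represents the complete graph on pairs outside the $V_i$'s, not $G$ restricted there.

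The fix, and this is where I expect the real work to be, is to interleave the three-permutation idea with an actual word-representant of $G$ rather than using bare permutations. Take a uniform word-representant $w_G$ of $G$ (which exists by \cite{KP08}), and note that we may assume it starts and ends with a copy of each letter in a controlled order. The strategy is: start from $w_G$ (or a suitable power/modification of it so that it looks like a concatenation of permutations — recall $r(\pi(w))w$ trick), which already encodes all adjacencies of $G$ correctly as alternations; then surgically alter the relative order of the elements of each $V_i$ in exactly one ``copy'' so as to introduce one extra $xx$ or $yy$ factor for each pair $x,y \in V_i$, turning those alternating pairs into non-alternating (non-edge) pairs while leaving all pairs with at most one endpoint in a given $V_i$ untouched. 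The disjointness of the $V_i$ guarantees these local modifications do not interfere. The delicate point is to check that a single reversal of a block within one permutation-copy changes the induced subword of every inside-pair by exactly one unit of ``11-count'' (from $0$ to $1$, giving a non-edge under the ``at most $k=1$'' / ``at least $2$'' dichotomy — wait, we need the count to land at $\ge 2$ for a non-edge), so in fact I would arrange for two reversals, or combine the reversal with the already-present structure of $w_G$, so that inside-pairs of $V_i$ get a total 11-count of at least $2$ and outside-pairs keep count $0$ if adjacent in $G$ (hence adjacent in $H$) and whatever count $\ge 2$ they had if non-adjacent in $G$.

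The cleanest packaging, which I would aim for, is: write $w = \alpha \, w_G \, \beta$ where $\alpha$ and $\beta$ are (reverses of) initial permutations chosen so that for every pair $x,y$ the ``boundary contribution'' to the 11-count is $0$ when $x,y$ lie in distinct blocks or in no block, and is $\ge 2$ when $x,y$ share a block $V_i$ — achieved by listing each $V_i$ in topological order in $\alpha$ and in reverse topological order in $\beta$ (or vice versa), while all cross-block elements keep a fixed consistent order, and then verifying the four-or-five-case analysis exactly as in the proof of Proposition~\ref{three-perms-prop}. The main obstacle is thus bookkeeping: ensuring simultaneously (i) every edge of $G$ not inside any $V_i$ survives as an edge, (ii) every edge of $G$ inside some $V_i$ is destroyed, and (iii) no non-edge of $G$ accidentally becomes an edge; semi-transitivity of the chosen orientation is what should make (i) and (iii) compatible with the reversals in (ii), since it controls which induced subwords $w_G|_{\{x,y\}}$ look like, and the disjointness of the $V_i$ is what keeps the modifications independent.
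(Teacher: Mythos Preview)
Your instincts are right---prepend permutations to a word-representant of $G$ with each $V_i$ reversed in one copy---and you correctly catch mid-paragraph that a single reversal only bumps the 11-count by one, so two are needed. But the ``cleanest packaging'' $w=\alpha\,w_G\,\beta$ you then settle on does not deliver two reversals for same-block edges: if $x,y\in V_i$ with $xy\in E(G)$ and, say, $w_G|_{\{x,y\}}=xyxy\cdots xy$, then $\alpha\,w_G\,\beta|_{\{x,y\}}=xy\mid xyxy\cdots xy\mid yx$ has a single $yy$, total 11-count $1$, so $xy$ survives as an edge. One permutation on each side of an alternating block creates only one order-flip, not two. Worse, tying $\alpha,\beta$ to a topological order of a semi-transitive orientation (rather than to $\pi(w_G)$) can destroy cross-block edges: if that order disagrees with $\pi(w_G)$ on $x,y$, then $\alpha\,w_G\,\beta|_{\{x,y\}}$ can be $xy\mid yxyx\cdots yx\mid xy$, which has two 11's. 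Finally, a single uniform $w_G$ only guarantees 11-count $\ge 1$ for non-edges of $G$, so you must at least double it (as you half-suggest) before any of this works.

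The paper resolves all three issues simultaneously by placing both extra permutations on the \emph{same} side and tying them to $\pi(w)$ rather than to an extraneous orientation: with $R=[n]\setminus\bigcup_i V_i$, set $\Pi_1=V_1\cdots V_kR$ (each part listed in $\pi(w)$-order) and $\Pi_2=r(V_1)\cdots r(V_k)R$, and take $W=\Pi_1\Pi_2\,\pi(w)\,w\,w$. For a same-block edge one gets $(\Pi_1\Pi_2\pi(w))|_{\{x,y\}}=xy\,yx\,xy$ with two 11's, because the reversed $\Pi_2$ is sandwiched between two agreeing copies; for a pair not in a common $V_i$ one has $\Pi_1|_{\{x,y\}}=\Pi_2|_{\{x,y\}}$, so Proposition~\ref{three-perms-prop} gives at most one 11 in $\Pi_1\Pi_2\pi(w)$ and none in the alternating tail $ww$; and non-edges of $G$ already have at least two 11's in $ww$. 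The semi-transitive orientation never enters.
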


\begin{proof}  Let $w$ be a word representing $G$ and recall that $\pi(w)$ denotes the initial permutation of $w$.  By \cite{KP08}, we can assume that $w$ is uniform.  Also, we let $R:=[n]\backslash(\cup_{1\leq i\leq k} V_i)$ and we define the permutation $\Pi_1:=V_1V_2\ldots V_kR$, where all letters in each subset follow the same order as they have in $\pi(w)$.  Let $\Pi_2:=r(V_1)r(V_2)\ldots r(V_k)R$. We will next prove that the word $W=\Pi_1\Pi_2\pi(w)ww$ 1-11-represents\footnote{In fact, the shorter word $\Pi_1\Pi_2ww$ also represents the graph $H$, but we inserted $\pi(w)$ for the convenience of the reader, making it easier to  follow our arguments.} the graph $H$.

Note that the word $\pi(w)ww$ 1-11-represents $G$ 
and since $w$ is uniform, each edge of $G$ is represented in $w$ by strict alternation of letters (avoiding occurrences of the pattern 11). Clearly, all non-edges in $G$ remain non-edges in $H$.

If $xy$ is an edge in $G$ that belongs to $E(V_i)$ for some $i$, then by Proposition~\ref{three-perms-prop}, $(\Pi_1\Pi_2\pi(w))|_{\{x,y\}}$ contains at least two occurrences of the patterns 11, and hence $x$ and $y$ are not connected by an edge in $H$.

Suppose that $xy$ is an edge in both $G$ and $H$. Hence, $x$ and $y$ cannot belong to any $V_i$. But then in the permutations $\Pi_1$ and $\Pi_2$ the letters $x$ and $y$ are in the same order.  By Proposition~\ref{three-perms-prop}, the word $(\Pi_1\Pi_2\pi(w))|_{\{x,y\}}$ contains at most one occurrence of the pattern 11. As it was shown above, the word  $(\pi(w)ww)|_{\{x,y\}}$ has no such occurences. So, $W|_{\{x,y\}}$ has at most  one occurrence of the pattern 11, which is consistent with  $xy$ being an edge in $H$. 
\end{proof}

A particular case of Theorem~\ref{new-tool-thm}, when each $V_i$ is of size 2, is useful from an applications point of view and hence is stated as a separate result.

\begin{cor}\label{adding-matching} Let the graph $G$ be obtained from a graph $H$ by adding a matching (that is, by adding new edges no pair of which shares a vertex). If $G$ is word-representable then $H$ is $1$-$11$-representable. \end{cor}

\begin{figure}
\begin{center}
\begin{tabular}{c c c c c c }
\begin{tikzpicture}[scale=0.5]
			
\draw (2,5) node [scale=0.7, circle, draw](node1){$1$};
\draw (5,8) node [scale=0.7, circle, draw](node2){$2$};
\draw (9,8) node [scale=0.7, circle, draw](node3){$3$};
\draw (12,5) node [scale=0.7, circle, draw](node4){$4$};
\draw (9,2) node [scale=0.7, circle, draw](node5){$5$};
\draw (5,2) node [scale=0.7, circle, draw](node6){$6$};
\draw (8,5)node [scale=0.7, circle, draw](node7){$7$};
			
\draw (node1)--(node2);
\draw (node2)--(node3);
\draw (node3)--(node4);
\draw (node4)--(node5);
\draw (node5)--(node6);
\draw (node6)--(node7);
\draw (node6)--(node1);
\draw (node2)--(node7);
\draw (node7)--(node4);
\end{tikzpicture}

&

&

&

&
	
\begin{tikzpicture}[node distance=1.2cm,auto,main node/.style={circle,draw,inner sep=2.5pt,minimum size=4pt}]

\node[main node] (1) {{\tiny 1}};
\node[main node] (2) [above of=1] {{\tiny 2}};
\node[main node] (3) [right of=2] {{\tiny 3}};
\node[main node] (4) [below of=3] {{\tiny 4}};
\node[main node] (5) [below left of=2, yshift=2mm] {{\tiny 5}};
\node[main node] (6) [above right of=2, xshift=-2mm] {{\tiny 6}};
\node[main node] (7) [above right of=3] {{\tiny 7}};
\node[main node] (8) [below left of=1] {{\tiny 8}};

\path
(1) edge (2)
     edge (3)
     edge (4)
     edge (5)
     edge (8);

\path
(2) edge (3)
     edge (4)
     edge (5)
     edge (6)
     edge (7)
     edge (8);

\path
(3) edge (4)
     edge (6)
     edge (7);
     
\path
(4) edge (7)
     edge (8);

\end{tikzpicture}

\end{tabular}
\end{center}
\caption{The graph $BW_3$ and a minimal non-word-representable split graph} \label{BW3}
\end{figure}
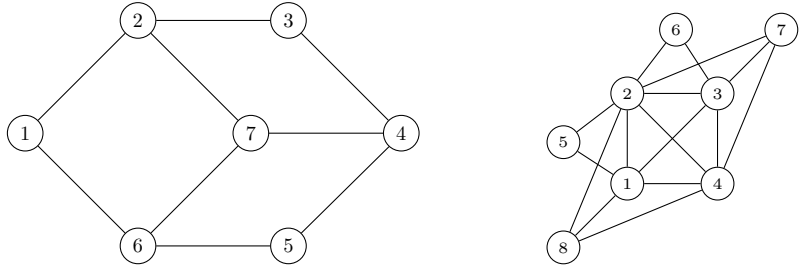

\subsection{The Chv\'{a}tal graph is 1-11-representable}\label{Chvatal-sub}

\noindent
The Chv\'atal graph, given to the left in Figure~\ref{Chvatal-graph}, is the smallest triangle-free 4-chromatic 4-regular graph on 12 vertices \cite{C1970}. This graph is non-word-representable \cite{KP23}.
Firstly,  we show that no known tool from \cite{CKKKP2019} can be applied for proving its 1-11-representability.

\begin{prop}\label{prop-non-applicability}
$1$-$11$-representability of the Chv\'atal graph does not follow from Lemmas~\ref{type1-lem},~\ref{type2-lem}, and~\ref{type3-lem}.
\end{prop}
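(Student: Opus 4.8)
The plan is to argue separately that each of the three lemmas fails to apply, exploiting structural features of the Chv\'atal graph. The overarching strategy is to use the fact that the Chv\'atal graph $C$ is vertex-transitive and $4$-regular on $12$ vertices, so many case analyses reduce to checking a single representative vertex.

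First I would dispatch Lemma~\ref{type1-lem} and Lemma~\ref{type3-lem}. For Lemma~\ref{type1-lem}(a), I would observe that $C$ is $2$-connected (indeed $4$-regular and $4$-connected), so it is neither a disjoint union, nor a glueing of two smaller graphs in a single vertex, nor two graphs joined by a single edge (a bridge), hence part (a) cannot produce $C$. For part (b), a vertex added adjacent only to the two endpoints of an edge would have degree $2$, but $C$ is $4$-regular, so no vertex of $C$ plays that role; moreover removing any degree-$2$ vertex is impossible since there is none. For Lemma~\ref{type3-lem}, I would need to show that for \emph{every} vertex $v$, the graph $C\setminus v$ is neither a comparability graph nor a circle graph. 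By vertex-transitivity it suffices to check one vertex $v$. The key point is that $C$ contains induced cycles and specific obstructions that survive vertex deletion: $C\setminus v$ still contains an induced odd cycle of length $\ge 5$ with no chords, and odd holes of length $\ge 5$ are not comparability graphs (they admit no transitive orientation — any orientation of $C_{2k+1}$ either is cyclic or creates a directed path forcing a chord). For the circle-graph part, I would exhibit in $C\setminus v$ one of the known forbidden induced subgraphs for circle graphs (for instance $W_5$, or $W_7$, or one of Bouchet's obstructions), again using vertex-transitivity to fix $v$. This is the step where a short explicit computation is unavoidable: one must pin down an induced subgraph of $C\setminus v$ that is simultaneously an obstruction to both properties, or handle the two properties with two different induced subgraphs.

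For Lemma~\ref{type2-lem} the argument is more delicate, because it produces $C$ from a \emph{word-representable} supergraph $G$ by deleting, in case (a), all edges within some set $A\subseteq V(G)$, or in case (b), all edges from a fixed vertex $v$ to its neighbours inside some $A$. The plan is to suppose such a $G$ exists and derive a contradiction with word-representability of $G$, i.e.\ with the existence of a semi-transitive orientation (Theorem~\ref{semi-trans-thm}). In case (b), $C$ would equal $G$ minus some edges all incident to one vertex $v$; so $G$ agrees with $C$ on all of $C\setminus v$, and $v$ has the same neighbourhood in $C$ plus possibly more. Then $G$ contains $C\setminus v$ as an induced subgraph together with one extra vertex $v$ adjacent to at least all the original neighbours of $v$ in $C$. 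I would show that \emph{every} such graph $G$ contains an induced $W_5$ (or another small non-word-representable graph), making $G$ non-word-representable — this uses the fact that $C$ is ``barely'' non-word-representable, so restoring a small part of it already forces a known obstruction. In case (a), $C = G\setminus\{xy : x,y\in A\}$ means $G[A]$ is a clique and $G$ restricted to the complement of the removed pairs is $C$; since $C$ is triangle-free, any $A$ with $|A|\ge 3$ would put a triangle into $G$, but that alone does not contradict word-representability, so I would instead argue that the clique on $A$ together with the rest of $C$'s structure forces a semi-transitivity violation. Concretely, I expect to show that in any acyclic orientation of such a $G$, the clique $A$ is totally ordered, and combining that order with the paths through $C\setminus A$ produces a shortcut.

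The step I expect to be the main obstacle is Lemma~\ref{type2-lem}(a): unlike parts where a simple invariant (connectivity, regularity, degree count) settles matters, here one must reason about \emph{all} possible word-representable extensions $G$ and show each is non-word-representable, which amounts to a semi-transitive-orientation argument about adding a clique to $C$. My intended route is to fix the (small) possibilities for $A$ — since $C$ is triangle-free and $4$-regular, $A$ is very constrained — enumerate them up to the automorphism group of $C$, and for each show the resulting $G$ has no semi-transitive orientation by locating an unavoidable shortcut or small non-word-representable induced subgraph from the catalogue in \cite{KitaevSun}. Everything else (Lemmas~\ref{type1-lem} and~\ref{type3-lem}, and case (b) of Lemma~\ref{type2-lem}) should follow from vertex-transitivity plus one or two explicit induced-subgraph witnesses.
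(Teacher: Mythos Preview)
Your handling of Lemma~\ref{type1-lem} is fine, and your plan for Lemma~\ref{type3-lem} via vertex-transitivity is a legitimate alternative to the paper's argument (the paper instead exhibits two vertex-disjoint induced $5$-cycles for part (a), and four induced copies of the Bouchet obstruction $BW_3$ with empty common intersection for part (b), so that no single vertex deletion destroys all witnesses). Either route works.

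The genuine gap is in Lemma~\ref{type2-lem}. First, in case (a) you assert that $G[A]$ is a clique; this is wrong. The operation removes \emph{all} edges of $G$ inside $A$, so what you know is only that $A$ is independent in $C$, while $G[A]$ can be any graph on $A$. Your proposed enumeration therefore ranges over all independent sets $A$ of $C$ (up to automorphism there are still many) and, for each, over \emph{all} graphs on $A$, not just the clique. In case (b) your plan to find an induced $W_5$ (or another small obstruction) in every one-vertex extension is plausible but unsubstantiated: $C$ is triangle-free of girth $4$, and adding a few edges at a single vertex does not obviously manufacture a $W_5$; you would be facing a large case analysis with no guiding principle.

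The paper avoids all of this with a short uniform argument that you are missing: take a semi-transitive orientation of $G$, delete $E'$, and note that the induced orientation of $C$ must contain a shortcut $S$ (since $C$ is not word-representable). Because $C$ is triangle-free, any four consecutive vertices $u_1\to u_2\to u_3\to u_4$ on the directed path of $S$ have $u_1u_3,u_2u_4\notin E(C)$; for $S$ not to have been a shortcut already in $G$, both $u_1u_3$ and $u_2u_4$ must lie in $E'$, while $u_1u_2,u_2u_3,u_3u_4$ (being edges of $C$) do not. But in case (a) this forces $u_1,u_2,u_3,u_4\in A$ and hence $u_1u_2\in E'$, a contradiction; in case (b) the two disjoint edges $u_1u_3,u_2u_4$ cannot both be incident to the single vertex $v$. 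This triangle-free shortcut argument is the key idea you should use instead of enumeration.
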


\begin{proof}
It is evident that Lemma~\ref{type1-lem} cannot be applied. 

Assume that Lemma~\ref{type2-lem} can be applied, i.e.\ that there is a word-representable graph $G,$ its vertex subset $A$ and a vertex $v$ such that $G\setminus E'$
is the Chv\'atal graph where either $E'= \{xy\in E(G)\ |\ x,y\in A\}$ or $E'= \{uv\in E(G)\ |\ v\in N_A(v)\}$.
Consider a semi-transitively oriented copy of $G$ (that exists by Theorem~\ref{semi-trans-thm}) and remove from it the edges in $E'$.
The obtained oriented graph must contain a shortcut $S$ since  the Chv\'atal graph is not word-representable \cite{KP23}. Since  the Chv\'atal graph  is triangle-free, $S$ must contain a directed path $u_1\rightarrow u_2\rightarrow u_3\rightarrow u_4$ with edges $u_1u_3$ and $u_2u_4$ missing. However, none of the variants of $E'$ can
simultaneously contain the edges $u_1u_3$ and $u_2u_4$ and miss the edges $u_1u_2, u_2u_3,$ and $u_3u_4$. Hence, $S$ must be a shortcut in $G$, a contradiction. 

Finally, let us show that Lemma~\ref{type3-lem} cannot be applied. Since the Chv\'atal graph contains two non-intersecting cycles of length 5 induced by the sets $\{1,2,3,7,8\}$ and $\{5,6,10,11,12\}$ (see the left graph in Figure~\ref{Chvatal-graph} for the notations), removing any vertex in the graph cannot produce a comparability graph. Moreover,  it is known \cite{B94} that a circle graph cannot contain a graph $BW_3$ (the left one in Figure~\ref{BW3}) as an induced subgraph. It is straightforward to verify that each of the subsets $V_1=\{ 2,3,5,6,8,9,12\}$,
$V_2=\{ 3,4,6,7,8,10,11\}$, $V_3=\{ 1,4,5,8,9,10,12\}$, and $V_4=\{ 1,2,6,7,10,11,12\}$ induces a copy of $BW_3$. Since $V_1\cap V_2\cap V_3\cap V_4=\emptyset$, the Chv\'atal graph cannot be turned into a circle graph by removing one vertex.
\end{proof}

\begin{rem}
Note that the same arguments as those in Proposition~\ref{prop-non-applicability} for non-applicability of  Lemma~\ref{type2-lem} work not only for the Chv\'atal graph, but for any triangle-free graph. 
\end{rem}

However, the new tool from Theorem~\ref{new-tool-thm} works well for the Chv\'{a}tal graph.

\begin{thm}
The Chv\'{a}tal graph is $1$-$11$-representable.
\end{thm}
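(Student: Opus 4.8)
The plan is to apply Corollary~\ref{adding-matching}: I will exhibit a matching $M$ on the vertex set of the Chv\'{a}tal graph such that the graph obtained by adding $M$ to the Chv\'{a}tal graph is word-representable, which by Theorem~\ref{semi-trans-thm} amounts to producing a semi-transitive orientation of that augmented graph. The right-hand picture in Figure~\ref{Chvatal-graph} already hints at the choice: the two edges $31$ and $42$ form a matching (they share no vertex), and the claim is that the Chv\'{a}tal graph together with these two edges admits a semi-transitive orientation. So the concrete task is to verify that the orientation drawn on the right of Figure~\ref{Chvatal-graph} is indeed semi-transitive, i.e.\ acyclic and shortcut-free.

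First I would record the orientation explicitly as a list of arcs and check acyclicity, e.g.\ by exhibiting a topological order of the twelve vertices consistent with all arcs (the drawing suggests something close to $4,3,6,9,7,2,5,1,8,10,11,12$ as a source-to-sink order, which I would pin down precisely). Next, and this is the crux, I would check that there are no shortcuts: for every directed path $v_0\to v_1\to\cdots\to v_\ell$ with $\ell\ge 3$ for which the arc $v_0\to v_\ell$ is present, all arcs $v_i\to v_j$, $i<j$, must be present as well. Because the Chv\'{a}tal graph is triangle-free and we have only added two edges, the graph contains very few triangles (only those created by $31$ and $42$, namely any triangle using one of these new edges and two old edges sharing the relevant endpoints — I would enumerate these), so most short directed paths automatically fail to have a shortcutting arc. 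The verification is therefore finite and quite constrained: I would organize it by the shortcutting arc, noting that a shortcutting arc $v_0\to v_\ell$ must lie in a $K_4$ minus a perfect matching or a larger shortcut, and in a (near-)triangle-free graph the only dangerous configurations are $4$-vertex shortcuts $v_0\to v_1\to v_2\to v_3$ with $v_0\to v_3$ present but $v_0v_2$ or $v_1v_3$ absent. Since every triangle in the augmented graph uses one of the two added edges, I can restrict attention to paths whose shortcutting arc is $1\to 3$ (or $3\to 1$) or $2\to 4$ (or $4\to 2$), depending on orientation, and check each by hand.

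The main obstacle I anticipate is purely combinatorial bookkeeping: making sure the case analysis of potential shortcuts is genuinely exhaustive rather than merely plausible from the picture. To keep this manageable I would (i) fix the arc set once and for all from Figure~\ref{Chvatal-graph}; (ii) list all directed edges out of and into each vertex; (iii) observe that any shortcut's shortcutting edge $v_0\to v_k$ together with $v_0\to v_1$ and $v_{k-1}\to v_k$ forces $v_0,v_1,v_{k-1}$ and $v_0,v_{k-1},v_k$ to be near-triangles, hence forces the shortcutting edge to be one of $\{13,31,24,42\}$; and (iv) for each such candidate shortcutting edge, trace all directed paths of length $\ge 3$ between its endpoints and confirm transitivity on each. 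I expect every such path to be transitively closed, completing the proof that the augmented graph is word-representable and hence, by Corollary~\ref{adding-matching}, that the Chv\'{a}tal graph is $1$-$11$-representable.
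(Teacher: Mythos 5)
Your overall route is exactly the paper's: add the matching $\{13,24\}$, take the orientation drawn on the right of Figure~\ref{Chvatal-graph}, verify that it is semi-transitive, and invoke Corollary~\ref{adding-matching}. The choice of matching and orientation, and the reduction to a finite shortcut check, are all correct.

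However, your step (iii) contains a genuine gap: it is not true that a potential shortcut's shortcutting edge must be one of $\{13,31,24,42\}$. A shortcut $v_0\to v_1\to\cdots\to v_k$ with arc $v_0\to v_k$ does \emph{not} force $\{v_0,v_1,v_k\}$ or $\{v_0,v_{k-1},v_k\}$ to be triangles (the chords $v_1v_k$ and $v_0v_{k-1}$ need not be edges; the four edges $v_0v_1$, $v_1v_2$, $v_2v_3$, $v_0v_3$ already live on a $4$-cycle, and the Chv\'atal graph has girth $4$). Concretely, in this very orientation the only directed path on four vertices whose shortcutting arc is actually present is $4\to 3\to 2\to 1$ with the arc $4\to 1$ — and $41$ is an \emph{original} edge of the Chv\'atal graph, not an added one; the added edges $31$ and $42$ occur here as chords, not as the shortcutting edge. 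Your plan of ``restricting attention to paths whose shortcutting arc is $1\to3$/$3\to1$ or $2\to4$/$4\to2$'' would therefore skip precisely the one configuration that needs to be examined (it passes: the induced orientation on $\{1,2,3,4\}$ is transitive, since all six edges are present and directed $4\to3\to2\to1$ with $4\to2$, $4\to1$, $3\to1$). The repair is simply to do the unrestricted check the paper does: because the orientation is very shallow, there are only a handful of directed paths on four or more vertices (each ends at $12$ or at $1$); all of those ending at $12$ start at a vertex not adjacent to $12$, so they carry no shortcutting arc, and the remaining one is $4\to3\to2\to1$, which is transitive. With that exhaustive enumeration in place of your step (iii), the argument closes and matches the paper's proof.
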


\begin{proof}
Add to the Chv\'{a}tal graph the edges 13 and 24 and consider the orientation of the obtained graph $G$ presented to the right in Figure~\ref{Chvatal-graph}. It is easy to verify that this orientation is acyclic. Assume that it has a shortcut. Note that a shortcut must contain a  path of length at least 3. There are exactly seven such paths in $G$, namely, 
$$9\rightarrow 10 \rightarrow 11\rightarrow 12,\ \ \  6\rightarrow 10 \rightarrow 11\rightarrow 12,\ \ \ 6\rightarrow 7 \rightarrow 11\rightarrow 12,$$
$$6\rightarrow 7 \rightarrow 8\rightarrow 12,\ \ \ 4\rightarrow 10 \rightarrow 11\rightarrow 12,\ \ \ 4\rightarrow 3 \rightarrow 11\rightarrow 12,$$
$$4\rightarrow 3 \rightarrow 2\rightarrow 1.$$ First six of them are not shortcuts since the vertex $12$ is not adjacent to  4, 6, or 9. The last one is not a shortcut since the subgraph induced by the vertices $1,2,3,4$ is transitive. So, the orientation of $G$ is semi-transitive and by Corollary~\ref{adding-matching} 
 the Chv\'{a}tal graph is 1-11-representable. 
\end{proof}

\subsection{1-11-representability of split graphs and their generalizations}\label{split-sub}

\noindent
Our second tool is a new technique of finding permutational 1-11-representants for certain graphs. We first present the technique for split graphs and then generalize it to a class of graphs that can be partitioned into an independent set and a comparability graph. However, we believe that the new technique could be applicable  
in proving 1-11-representability of other classes of graphs.

Studying word-representation of split graphs is a hard problem, and it has been the subject of interest in \cite{CKS21,I2021,KLMW17,KP24}. It is remarkable that each split graph is 1-11-representable as is shown in the following theorem.

\begin{thm}\label{split-thm} Any split graph is permutationally $1$-$11$-representable. \end{thm}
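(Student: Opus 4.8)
The plan is to build, for a split graph $G$ with a fixed split partition $V=K\cup I$ where $K=\{b_1,\dots,b_p\}$ induces a clique and $I=\{a_1,\dots,a_m\}$ is independent (the cases $K=\emptyset$ or $I=\emptyset$ being trivial), an explicit $1$-$11$-representant of the form $w=\Pi_1\Pi_2\cdots\Pi_t$, a concatenation of permutations of $V$. The bookkeeping device is a small extension of Proposition~\ref{three-perms-prop}: in such a word, $w|_{\{x,y\}}$ contains exactly $s(x,y)$ occurrences of the pattern $11$, where $s(x,y)$ is the number of indices $\ell\in\{1,\dots,t-1\}$ at which $x$ and $y$ appear in opposite relative orders in $\Pi_\ell$ and in $\Pi_{\ell+1}$ — each such ``reversal'' produces one factor $xx$ or $yy$ across the corresponding junction, and no factor of this type arises anywhere else. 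Thus it suffices to arrange the permutations so that $s(x,y)\le 1$ on every edge and $s(x,y)\ge 2$ on every non-edge.

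First I would keep the clique vertices in the fixed order $b_1,\dots,b_p$ inside every $\Pi_\ell$, so that $s(b_i,b_j)=0$ and all clique edges are automatically correct. To place an independent vertex $a_i$ in $\Pi_\ell$, think of $b_j$ as sitting at position $j\in\mathbb{R}$ and choose a position $\rho_\ell(i)\in\mathbb{R}\setminus\mathbb{Z}$ for $a_i$ (ties among the $a$'s broken by index, consistently). For fixed $a_i$ the sequence $\rho_1(i),\dots,\rho_t(i)$ is a ``trajectory'', and $s(a_i,b_j)$ equals the number of times this trajectory crosses the integer $j$. So I need a trajectory that crosses each index $j$ with $b_j\in N_K(a_i)$ at most once and each index $j$ with $b_j\notin N_K(a_i)$ at least twice. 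This is easy to produce: list the non-neighbour indices $j_1<\dots<j_r$ of $a_i$, start to the left of all of $K$, move monotonically rightwards, and at each $j_k$ perform a wiggle $j_k-\tfrac12\to j_k+\tfrac12\to j_k-\tfrac12$ before continuing; this crosses every $j_k$ at least twice and crosses each neighbour index at most once (during the single monotone passage over it, since the wiggles stay strictly between consecutive integers and the trajectory stops near $j_r$). Padding each trajectory on the right by repeating its last position (which introduces no reversal) to a common length lets all the $a_i$ fit into the same permutations, and then $s(a_i,b_j)$ is correct for every clique–independent pair.

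What is still unaccounted for are the non-edges inside $I$: the trajectories above ignore the pairs $a_i,a_{i'}$ and might leave $s(a_i,a_{i'})\le 1$. I would repair this by prepending the three permutations
$$(a_1a_2\cdots a_m\, b_1b_2\cdots b_p),\qquad(a_ma_{m-1}\cdots a_1\, b_1b_2\cdots b_p),\qquad(a_1a_2\cdots a_m\, b_1b_2\cdots b_p),$$
in which every $a$ precedes every $b$ and the $b$'s keep their fixed order. Inside this prefix each pair $a_i,a_{i'}$ reverses twice, so $s(a_i,a_{i'})\ge 2$ in the final word; on the other hand, since all $a$'s lie to the left of all $b$'s throughout the prefix \emph{and} at the very start of the main part (every trajectory begins to the left of $K$), the prefix adds no reversal to any $K$–$K$ or $K$–$I$ pair, so those counts, hence those adjacencies, are unchanged. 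The concatenation of this prefix with the padded main permutations is then the desired permutational $1$-$11$-representant of $G$.

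The one delicate point I expect is exactly this interaction of the three tasks: the prefix must be completely transparent to clique edges (already at $s=0$, so \emph{not one} reversal may be introduced) and to clique–independent edges (already at $s\le 1$), which is why every main trajectory has to be routed through the far-left region so that its junction with the prefix is reversal-free; once this is set up the verification is routine. Finally, the same scheme should yield the announced generalization to a graph whose vertices split into an independent set $I$ and a set $C$ inducing a comparability graph: replace the single fixed clique order by a realizer $L_1,\dots,L_s$ of a transitive orientation of $G[C]$ (so that $L_1\cdots L_s$ already $0$-$11$-represents $G[C]$ by Theorem~\ref{comp-thm}), repeat this block enough times to create slack, and route each vertex of $I$ through it by the same crossing argument, since insertions leave all pairs inside $C$ untouched and a short prefix analogous to the one above handles the non-edges inside $I$.
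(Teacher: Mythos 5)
Your proposal is correct, and it runs on the same engine as the paper's proof: the observation (the natural $t$-permutation extension of Proposition~\ref{three-perms-prop}) that in a concatenation of permutations the number of occurrences of the pattern 11 in $w|_{\{x,y\}}$ equals the number of junctions at which $x$ and $y$ swap relative order, together with a three-permutation prefix that forces two swaps inside the independent set --- your prefix is essentially the paper's factor $w_0$. Where you genuinely diverge is in how the clique--independent adjacencies are encoded. The paper keeps each independent vertex parked at the right end and appends one permutation $\Pi_j$ per clique vertex $a_j$, in which the non-neighbours $O_j$ of $a_j$ jump in front of $a_j$ and back out again (two swaps), while the clique order is cyclically rearranged so that every clique pair swaps exactly once overall. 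You instead freeze the clique order (zero swaps per clique pair) and let each independent vertex travel rightwards through the clique along a ``trajectory'' that wiggles across each non-neighbour, so that $s(a_i,b_j)$ becomes a crossing number; padding and the far-left start make the prefix and the junctions transparent to the pairs that are already tight, and extra swaps on non-edges are harmless. Both constructions yield valid permutational $1$-$11$-representants of comparable length; the paper's version has the practical advantage that its word is reused verbatim in the proof of Theorem~\ref{ind+comp}, where the clique block is replaced by a permutational representation of a comparability graph, whereas your fixed clique order requires the (plausible but only sketched) modification you indicate at the end.
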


\begin{proof} Let $A=\{a_1,\ldots,a_k\}$ be a clique and $B=\{b_1,\ldots,b_{\ell}\}$ be an independent set in a split graph $S$, so that $A\cup B$ is the set of all vertices in~$S$. For a vertex $a_i\in A$ let $N_i=N_B(a_i)$ (resp., $O_i=B\backslash N_i$) be the set of neighbours (resp., non-neighbours) of $a_i$ in $B$. We put
$$w_0:=a_1a_2\ldots a_kb_1b_2\ldots b_{\ell}\ a_1a_2\ldots a_k  b_{\ell} b_{\ell-1}\ldots b_1\ a_1a_2\ldots a_kb_1b_2\ldots b_{\ell}$$
and define the permutations
\begin{eqnarray}
\Pi_k&:=& a_1a_2\ldots a_{k-1}O_ka_kN_k; \nonumber\\ 
\Pi_j&:=& a_ka_{k-1}\ldots a_{j+1}a_1a_2\ldots a_{j-1}O_ja_jN_j, \mbox{ for }0<j<k; \nonumber \\ 
\Pi_0&:=& a_ka_{k-1}\ldots a_1b_1b_2\ldots b_{\ell}. \nonumber
\end{eqnarray}

Then the word $w=w_0\Pi_k\Pi_{k-1}\ldots \Pi_0$ permutationally 1-11-represents the graph $S$.  

Indeed, the factor $w_0$ of $w$ ensures independence of the set $B$. Moreover, for each pair $a_i,a_j\in A$ where $i<j$ in 
 $w|_{a_i,a_j}$ 
we have a subsequence $a_ia_ja_ia_j\ldots a_ia_j$ to the left of the permutation $\Pi_j$ (including $\Pi_j$ itself), and a subsequence   $a_ja_ia_ja_i\ldots a_ja_i$ to the right of $\Pi_j$. So,
there is exactly one occurence of the pattern 11 in  $w|_{a_i,a_j}$ 
 ensuring that $a_i$ and $a_j$ are connected. Next, suppose that $a_i\in A$ and $b\in B$. If $a_i$ is adjacent to $b$, then $w|_{\{a_i,b\}}=a_iba_ib\ldots a_ib$, which has no pattern 11. Finally,  if $a_i$ is not adjacent to $b$ then $(w\backslash \Pi_i)|_{\{a_i,b\}}=a_iba_ib\ldots a_ib$ but $\Pi_i|_{\{a_i,b\}}=ba_i$, so  $w|_{\{a_i,b\}}$ has two occurrences of the pattern 11 that is consistent with $a_i$ being not adjacent to $b$. 

Thus, $w$ 1-11-represents $G$. Since $w_0$ is a concatenation of three permutations, $w$ is also a concatenation of permutations.
\end{proof}

To illustrate the construction in the proof of Theorem~\ref{split-thm}, we give a permutational 1-11-representation of the split graph given in Figure~\ref{BW3} to the right  that is observed in \cite{KLMW17} to be minimal non-word-representable (removing any of its vertices results in a word-representable graph). We have $A=\{1,2,3,4\}$, $B=\{5,6,7,8\}$, $k=\ell=4$, $N_1=\{5,8\}$, $O_1=\{6,7\}$, $N_2=\{5,6,7,8\}$, $O_2=\emptyset$, $N_3=\{6,7\}$, $O_3=\{5,8\}$, $N_4=\{7,8\}$ and $O_4=\{5,6\}$.  Separating permutations by space for more convenient visual representation, we have:
\begin{eqnarray}
w_0&=&12345678\ 12348765\ 12345678 \nonumber \\
\Pi_4 & = & 123O_44N_4 = 12356478\nonumber \\
\Pi_3 & = & 412O_33N_3= 41258367\nonumber \\
\Pi_2 & = & 431O_22N_2 = 43125678 \nonumber \\
\Pi_1 & = & 432O_11N_1=43267158\nonumber \\
\Pi_0 & = & 43215678\nonumber 
\end{eqnarray}
and so a permutational 1-11-representation of the graph to the right in Figure~\ref{BW3} is 
\begin{small}
$$12345678\ 12348765\ 12345678\ 12356478\ 41258367\ 43125678\ 43267158\ 43215678.$$
\end{small}

The following theorem is a far-reaching generalization of Theorem~\ref{split-thm}. However, we do keep Theorem~\ref{split-thm} as a separate result as we need the construction in its proof in what follows.

\begin{thm}~\label{ind+comp} Suppose that the vertices of a graph $G$ can be  partitioned into a comparability graph formed by vertices in $A=\{a_1,\ldots,a_k\}$ and an independent set formed by vertices in $B=\{b_1,\ldots,b_{\ell}\}$. Then $G$ is permutationally $1$-$11$-representable.  \end{thm}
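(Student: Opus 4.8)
The plan is to adapt the construction from the proof of Theorem~\ref{split-thm}, replacing the single clique $A$ by the comparability graph $G[A]$. The key observation is that a comparability graph is permutationally representable (Theorem~\ref{comp-thm}), so there is a concatenation of permutations of $A$, say $P_1P_2\cdots P_m$, representing $G[A]$: two vertices $a_i,a_j\in A$ are adjacent in $G$ iff they alternate in $P_1P_2\cdots P_m$. I would first replace the three blocks $a_1a_2\cdots a_k$ occurring inside $w_0$ by this permutational representant of $G[A]$, interleaved with the $B$-blocks $b_1\cdots b_\ell$ and its reverse exactly as before; this handles all pairs inside $A$ (giving a controlled number of $11$'s that respects the edge/non-edge structure of $G[A]$) and keeps $B$ independent. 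The more delicate part is the tail $\Pi_k\Pi_{k-1}\cdots\Pi_0$, whose sole purpose is, for each $a_i\in A$, to insert exactly one extra $11$ in $w|_{\{a_i,b\}}$ for every non-neighbour $b\in O_i$ of $a_i$, without disturbing any $A$--$A$ pair or any already-correct $A$--$B$ pair.

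The natural way to do this is to keep $k$ "correcting" permutations $\Pi_k,\ldots,\Pi_1$, one per vertex of $A$, each of the shape (some permutation of $A\setminus\{a_i\}$)\,$O_i\,a_i\,N_i$, plus a final clean-up permutation $\Pi_0$ restoring a fixed order. For the split-graph proof the permutation of $A\setminus\{a_i\}$ inside $\Pi_i$ was chosen so that each pair $a_i,a_j$ picks up exactly one transposition in the tail and thus exactly one $11$; here I would choose these internal orders so that the \emph{net} effect of the whole tail on each pair $\{a_i,a_j\}$ is a prescribed, even or odd, number of reversals that matches the parity already created by the head $P_1\cdots P_m$ interleaved with $B$. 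Concretely, I expect it suffices to have the tail reverse each pair $\{a_i,a_j\}$ an even number of times (e.g.\ each $\Pi_i$ for $i\notin\{i_0,j_0\}$ contributes $0$ net reversals to that pair while $\Pi_{i_0}$ and $\Pi_{j_0}$ together contribute an even number), so the tail is "transparent" on $A\times A$; then the count of $11$'s on $\{a_i,a_j\}$ is entirely governed by the comparability representant $P_1\cdots P_m$ and the $B$-interleaving, which one arranges (as in Theorem~\ref{split-thm}, where $P_1\cdots P_m = a_1\cdots a_k$ thrice) to give at most one $11$ on edges and at least two on non-edges of $G[A]$.

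For the $A$--$B$ pairs the analysis is essentially identical to Theorem~\ref{split-thm}: before the tail, $w|_{\{a_i,b\}}$ alternates with no $11$; the block $O_i a_i N_i$ inside $\Pi_i$ puts $b$ before $a_i$ exactly when $b\in O_i$, and one checks that $\Pi_0$ and the $A\setminus\{a_i\}$-prefixes of the $\Pi_j$'s contribute no $11$ to this pair, so $a_i b$ ends with no $11$ when adjacent and exactly two $11$'s when non-adjacent. Since $B$ is independent and appears in $w_0$ in the pattern $\cdots b\cdots b\cdots b\cdots$ with a reversal in the middle, every pair in $B$ already has the required two $11$'s, and the tail (whose $B$-parts are all in a single fixed order) adds none. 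Finally, each of $w_0$'s pieces and each $\Pi_i$ is a permutation of $A\cup B$, so $w$ is a concatenation of permutations, giving permutational $1$-$11$-representability.

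The main obstacle I anticipate is the bookkeeping in the middle paragraph: designing the internal $A\setminus\{a_i\}$-orders in the $\Pi_i$'s so that the tail is simultaneously (i) transparent on all $A$--$A$ pairs and (ii) produces exactly one corrective $11$ on each non-adjacent $A$--$B$ pair, all while each $\Pi_i$ remains a genuine permutation. In the split case this worked because $G[A]$ was complete and one only needed the head to create exactly one $11$ per pair; in general one must instead let the head realize the comparability structure of $G[A]$ and make the tail contribute a known parity, and verifying that these parities are consistent across all pairs is where the real care lies. If a fully transparent tail turns out to be impossible, the fallback is to allow the tail to add a fixed even number of $11$'s on every $A$--$A$ pair and to compensate by using two (rather than three) interleaved copies of $P_1\cdots P_m$ in the head, adjusting the target counts accordingly.
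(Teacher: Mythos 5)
Your overall instinct --- combine the split-graph word from Theorem~\ref{split-thm} with a permutational representant $P_1\cdots P_m$ of the comparability graph $G[A]$ --- is the right one, but the way you propose to merge them contains a genuine error. You want the tail $\Pi_k\cdots\Pi_0$ to be ``transparent'' on $A$--$A$ pairs by arranging that it reverses each such pair an \emph{even} number of times. This conflates the net change of relative order with the number of occurrences of the pattern 11: in a concatenation of permutations, \emph{every} flip of the relative order of $x$ and $y$ between consecutive permutations contributes exactly one occurrence of 11 to $w|_{\{x,y\}}$, so a pair reversed twice (net zero) still acquires two occurrences of 11 and would be forced to be a non-edge. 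A tail that is ``transparent'' in your sense would therefore destroy every edge inside $A$; genuine transparency requires \emph{zero} flips, which is obstructed by the blocks $O_i a_i N_i$ you need for the $A$--$B$ corrections (in the split construction each pair of $A$ necessarily flips once in the tail, between $\Pi_j$ and $\Pi_{j-1}$). The same parity confusion undermines your fallback. A secondary problem: a permutational representant of $G[A]$ uses $m$ permutations with $m$ unbounded (it is the dimension of the transitive order), so ``replacing the three $A$-blocks of $w_0$ by $P_1\cdots P_m$ interleaved with the $B$-blocks exactly as before'' is not well defined.

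The missing idea is to leave the split-graph word completely untouched and do the comparability correction \emph{after} it. The paper forms the split graph $G'$ in which $A$ is turned into a clique, takes its word $w=w_0\Pi_k\cdots\Pi_1\Pi_0$ from Theorem~\ref{split-thm} --- in which every pair of $A$ has \emph{exactly one} occurrence of 11 --- and then appends $\Pi'_2\cdots\Pi'_t$, where $\Pi'_s=Q_s b_1\cdots b_\ell$ and $Q_1Q_2\cdots Q_t$ permutationally represents $G[A]$ (Theorem~\ref{comp-thm}), with the vertices of $A$ renamed so that $Q_1$ coincides with the $A$-part of $\Pi_0$ and no spurious flip occurs at the junction. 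Edges of $G[A]$ keep the same relative order in all the $Q_s$ and so stay at exactly one occurrence of 11; non-edges flip at least once more and reach at least two; $A$--$B$ and $B$--$B$ pairs are unaffected because every appended permutation ends with $b_1\cdots b_\ell$ in a fixed order. No parity bookkeeping is needed, and the whole word is visibly a concatenation of permutations.
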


\begin{proof} Denote by $G'$ the split graph obtained from $G$ by substitution of $A$ by a clique $A'$. By Theorem~\ref{split-thm} $G'$ can be permutationally 1-11-represented by the word 
$w=w_0\Pi_k\Pi_{k-1}\ldots \Pi_1\Pi_0$. Moreover, for each $a_i, a_j\in A'$  the subword $w|_{a_i,a_j}$ contains exactly one occurrence of the pattern 11. 

 By Theorem~\ref{comp-thm}, the subgraph $G[A]$ is permutationally representable. So, let $Q_1Q_2\ldots Q_t$ be its representation by permutations $Q_i$ over the set $A$. Let $\Pi'_i=Q_ib_1b_2\ldots b_{\ell}$ for all $i\in\{1,2,\ldots,t\}$ and rename, if necessary, the vertices in $A$ so that $Q_1=a_ka_{k-1}\ldots a_1$  (i.e.\ so that  $\Pi'_1=\Pi_0$ in the word $w$). We put
$$W=w_0\Pi_k\Pi_{k-1}\ldots \Pi_1\Pi'_1\Pi'_2\ldots \Pi'_t$$  and show that it permutationally 1-11-represents $G$.

Indeed, the factor $w_0\Pi_k\Pi_{k-1}\ldots \Pi_1\Pi'_1$ of $W$ defines the split graph with a clique formed by the vertices in $A$ and an independent set $B$. Also, any edge $a_ib_j$ of the split graph remains an edge in $G$ since the order of these vertices is $a_ib_j$ in all permutations of $W$.

Let $i<j$ and consider vertices $a_i, a_j\in A$. By construction, in the word  $w=w_0\Pi_k\Pi_{k-1}\ldots \Pi_1\Pi'_1$ each edge $a_ia_j$ of the clique $A'$ is defined by the subsequence $$a_ia_ja_ia_j\ldots  a_ia_ja_ja_ia_ja_i\ldots a_ja_i$$ containing exactly one occurrence of the pattern 11. If $a_ia_j$ is an edge of the comparability graph $G[A]$, then in all permutations $\Pi'_s$ vertices $a_i$ and $a_j$ are in the same order $a_ja_i$, and so $\Pi'_1\Pi'_2\ldots \Pi'_t|_{\{a_i,a_j\}}$ avoids the pattern 11 and hence $a_ia_j$ remains an edge in $G$. Finally, if  $a_ia_j$ is not an edge of the comparability graph $G[A]$, then in $\Pi'_1\Pi'_2\ldots \Pi'_t|_{\{a_i,a_j\}}$  we  have at least one occurrence of the pattern 11, and hence $w|_{\{a_i,a_j\}}$ has at least two occurrences of the pattern 11, so in $G$ $a_i$ and $a_j$ are not connected by an edge.
\end{proof}

\subsection{1-11-representability of all graphs on at most 7 vertices}\label{7-verti-sub}

\noindent
1-11-representation of all graphs on at most 7 vertices is established in \cite{CKKKP2019}. However, the arguments in \cite{CKKKP2019} are based on the incorrect list of 25 non-word-representable graphs published in several places in the literature, in particular, in \cite{KL15}. The problem with the list was spotted in \cite{KitaevSun}, and the two incorrect graphs, Graphs 12 and 17, were replaced in \cite{KitaevSun} by the correct graphs given in Figure~\ref{two-graphs}. Hence, technically, 1-11-representation of all graphs on at most 7 vertices, but Graph 12 and Graph 17, is known, and next we complete the classification by confirming 1-11-representability of the graphs in  Figure~\ref{two-graphs}.

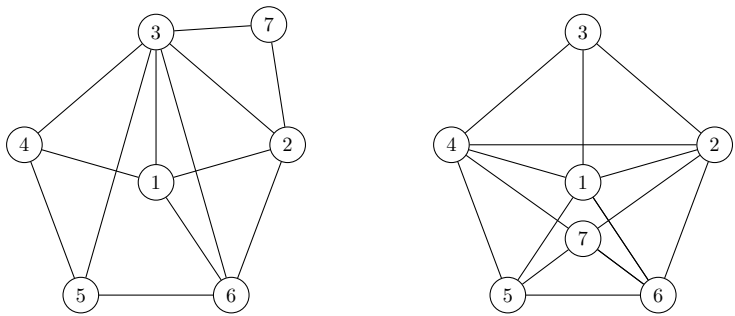
\begin{figure}
\begin{center}
\begin{tabular}{c c c c c c }
\begin{tikzpicture}[scale=0.5]
			
\draw (7,5) node [scale=0.7, circle, draw](node1){$1$};
\draw (10.5,6) node [scale=0.7, circle, draw](node2){$2$};
\draw (7,9) node [scale=0.7, circle, draw](node3){$3$};
\draw (3.5, 6) node [scale=0.7, circle, draw](node4){$4$};
\draw (5,2) node [scale=0.7, circle, draw](node5){$5$};
\draw (9,2) node [scale=0.7, circle, draw](node6){$6$};
\draw (10,9.2)node [scale=0.7, circle, draw](node7){$7$};
			
\draw (node1)--(node2);
\draw (node1)--(node3);
\draw (node1)--(node4);
\draw (node2)--(node3);

\draw (node3)--(node4);
\draw (node3)--(node5);
\draw (node4)--(node5);
\draw (node5)--(node6);
\draw (node6)--(node1);
\draw (node6)--(node2);
\draw (node6)--(node3);
\draw (node2)--(node7);
\draw (node7)--(node3);

\end{tikzpicture}

&

&

&

&
	
\begin{tikzpicture}[scale=0.5]
			
\draw (7,5) node [scale=0.7, circle, draw](node1){$1$};
\draw (10.5,6) node [scale=0.7, circle, draw](node2){$2$};
\draw (7,9) node [scale=0.7, circle, draw](node3){$3$};
\draw (3.5, 6) node [scale=0.7, circle, draw](node4){$4$};
\draw (5,2) node [scale=0.7, circle, draw](node5){$5$};
\draw (9,2) node [scale=0.7, circle, draw](node6){$6$};
\draw (7,3.5)node [scale=0.7, circle, draw](node7){$7$};

\draw (node1)--(node5);
\draw (node1)--(node2);
\draw (node1)--(node3);
\draw (node1)--(node4);
\draw (node6)--(node1);
\draw (node2)--(node3);
\draw (node2)--(node4);
\draw (node3)--(node4);
\draw (node4)--(node5);
\draw (node5)--(node6);
\draw (node6)--(node1);
\draw (node6)--(node2);
\draw (node6)--(node7);
\draw (node6)--(node7);
\draw (node7)--(node2);
\draw (node5)--(node7);
\draw (node7)--(node4);

\end{tikzpicture}

\end{tabular}
\end{center}
\caption{Non-word-representable Graph 12 (to the left) and Graph 17 (to the right)}\label{two-graphs}
\end{figure}

\begin{prop}
Graphs $12$ and $17$ are permutationally $1$-$11$-representable.
\end{prop}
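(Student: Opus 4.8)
The plan is to exhibit, for each of the two graphs, an explicit word (ideally a concatenation of permutations) and verify the 1-11-representation conditions edge by edge; but doing this by hand for a $7$-vertex graph is tedious, so first I would look for a structural shortcut using the tools already in the paper. The first thing to check is whether either graph is a split graph, or more generally whether its vertices partition into a comparability graph and an independent set: if so, Theorem~\ref{split-thm} or Theorem~\ref{ind+comp} applies immediately and we are done. Failing that, I would check whether each graph can be obtained from a word-representable graph by deleting a matching, or by deleting all edges inside finitely many pairwise disjoint vertex subsets, so that Corollary~\ref{adding-matching} or Theorem~\ref{new-tool-thm} applies; concretely, one would add a few carefully chosen edges to Graph 12 (resp.\ Graph 17) and try to produce a semi-transitive orientation of the enlarged graph, exactly as was done for the Chv\'atal graph. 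A further option is Lemma~\ref{type3-lem}: check whether deleting a single vertex from Graph 12 or Graph 17 leaves a comparability graph or a circle graph.

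The key steps, in order, are: (1) identify a clique $A$ of each graph and test whether $V\setminus A$ is independent — i.e.\ test the split property — and, more liberally, whether $G[A]$ is a comparability graph while $V\setminus A$ is independent; (2) if that succeeds, invoke Theorem~\ref{ind+comp} (or Theorem~\ref{split-thm}) and, for concreteness, run the construction in its proof to write down the permutational $1$-$11$-representant explicitly, just as was done for the graph in Figure~\ref{BW3}; (3) if the partition approach fails, search for a small set of extra edges whose addition yields a word-representable graph with a semi-transitive orientation, by listing all directed paths of length $\ge 3$ and checking the shortcut condition on each, then apply Corollary~\ref{adding-matching} or Theorem~\ref{new-tool-thm}; (4) as a fallback, test single-vertex deletions against the comparability/circle-graph criteria of Lemma~\ref{type3-lem}(a),(b), the latter amounting to finding a $2$-uniform word. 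In writing up, I would present whichever route works and simply display the resulting word, then verify the pattern-$11$ count on each edge and non-edge — this verification is routine once the word is in hand.

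The main obstacle I expect is step (1)/(3): it is not obvious a priori that these two particular $7$-vertex graphs fall under any of the clean sufficient conditions, and in the worst case one may have to construct the $1$-$11$-representant essentially ad hoc and then grind through all $\binom{7}{2}=21$ pairs to confirm the pattern counts. A secondary subtlety in step (3) is that after deleting a matching the resulting graph must be \emph{exactly} the target graph, not merely a supergraph, so the choice of which edges to add is constrained; and in step (4) the circle-graph test requires ruling out $BW_3$ and other forbidden induced subgraphs or, more directly, actually producing a $2$-uniform word, which can itself require a short search. My expectation is that at least one of these routes — most likely the split-graph partition or a single added matching plus a semi-transitive orientation — will go through cleanly, so that the proof reduces to displaying one word per graph together with a brief case check.
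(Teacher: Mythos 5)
Your first proposed route is exactly the one the paper takes: partition the vertex set into an independent set and a set inducing a comparability graph, then invoke Theorem~\ref{ind+comp}. However, as written your proposal is a search plan rather than a proof --- the decisive step, actually exhibiting such a partition for each graph and verifying it, is left open (you even flag that it is ``not obvious a priori'' that either graph satisfies any of the clean sufficient conditions). That verification is the entire content of the paper's proof, and it does go through: for Graph~12 the set $\{1,5,7\}$ is independent and the remaining vertices $\{2,3,4,6\}$ induce a triangle $236$ with the pendant edge $34$, which is a comparability graph; for Graph~17 the set $\{1,7\}$ is independent and $\{2,3,4,5,6\}$ induces the $5$-cycle $2\,3\,4\,5\,6$ with the chord $24$ (the ``house'' graph), which admits a transitive orientation (e.g.\ $2\to 3$, $3\to 4$, $2\to 4$, $5\to 4$, $5\to 6$, $2\to 6$) and is therefore a comparability graph. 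Note that neither graph is split (the complements of these independent sets are not cliques), so Theorem~\ref{split-thm} alone would not suffice and the full strength of Theorem~\ref{ind+comp} is needed. With the two partitions supplied, your argument coincides with the paper's; without them, the proof is incomplete, and your fallback routes (adding a matching, single-vertex deletion to a circle or comparability graph) are never actually shown to apply either.
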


\begin{proof}
Note that removing the independent set $\{1,5,7\}$ from Graph~12 results in a triangle with a pending edge, that is a comparability graph. Similarly, removing  the independent set $\{1,7\}$ from Graph~17 results in a 5-cycle with a chord, that is also a comparability graph.  So, by Theorem~\ref{ind+comp} both graphs are permutationally 1-11-representable. 
\end{proof}

Note that there exist shorter non-permutational 1-11-representants for these graphs found using software:
$$w_{12}=4573275465142631256\ \ \ \ \ w_{17}= 23474625731436251645.$$

\section{Concluding remarks}\label{final-sec}

\noindent
In this paper we introduce new tools to study 1-11-representable graphs, which allows to confirm 1-11-representability of Chv\'{a}tal graph,  Mycielski graph,  split graphs and graphs whose vertices can be partitioned into a comparability graph and an independent set. Finally, we confirm a claim in \cite{CKKKP2019} that all graphs on at most 7 vertices are 1-11-representable.

It is still an open problem whether each graph is 1-11-representable. Moreover, it is still unknown whether each graph is permutationally 1-11-representable, and towards constructing potential counterexamples, one should look for a graph for which none of the known existing tools is applicable.   Note that even if all graphs are (permutationally) 1-11-representable, the constructions of 1-11-representations presented in this paper can still be useful for finding explicit representations of graphs, with an aim towards potential applications. \\

\noindent
{\bf Acknowledgement.} The first author acknowledges the PICME scholarship from CNPQ, which funded him throughout the preparation of this paper. The second author is grateful to the SUSTech International Center for Mathematics for its hospitality during his visit to the Center in April 2024. The work of the third author was partially supported by the state contract of the Sobolev Institute of Mathematics (project FWNF-2022-0019). The authors are grateful to the unknown referee for many useful comments. \\

\noindent
{\bf Declaration.}  The authors declare no conflict of interest.


\begin{thebibliography}{99}
\bibitem{B72} A. Bouchet. Caract\'{e}risation des symboles crois\'{e}s de genre nul, {C. R. Acad. Sci. Paris} {\bf 274} (1972), 724--727.
\bibitem{B94} A. Bouchet. Circle graph obstructions, {\em J. Combin. Theory, Ser. B} {\bf 60} (1) (1994), 107--144.
\bibitem{CKS21} H. Z.Q. Chen, S. Kitaev, A. Saito. Representing split graphs by words, {\em Discussiones Mathematicae Graph Theory} {\bf 42} (2022) Article ID: 4314, p.1263.
\bibitem{CKKKP2019}  G.-S. Cheon, J. Kim, M. Kim, S. Kitaev and A. Pyatkin. On $k$-11-representable graphs, {\em J. Combin.} {\bf 10} (2019) 3, 491--513.
\bibitem{C1970} V. Chv\'{a}tal. The smallest triangle-free 4-chromatic 4-regular graph, {\em J. Combin. Theory} {\bf 9} (1970) 1, 93--94.
\bibitem{EQ71} S. Even and A. Itai. Queues, stacks and graphs, ``{\em Theory of Machines and Computations}'', pp. 71--86, Academic Press, New York, 1971.
\bibitem{F78} J. C. Fournier. Une caract\'{e}risation des graphes de cordes. {\em C. R. Acad. Sci. Paris} {\bf 286A} (1978), 811--813.
\bibitem{G80} M. C. Golumbic. ``{\em Algorithm Graph Theory}'', Academic Press, New York, 1980.
\bibitem{I2021} K. Iamthong. Word-representability of split graphs generated by morphisms. {\em Discrete Appl. Math.} {\bf 314} (2022) 284--303.
\bibitem{JKPR} M. Jones, S. Kitaev, A. Pyatkin and J. Remmel. Representing graphs via pattern avoiding words, {\em Electronic J. Combin.} {\bf 22(2)} (2015), \#P2.53, 20 pp..
\bibitem{Halldorsson} M. M. Halld\'{o}rsson, S. Kitaev and A. Pyatkin. Semi-transitive orientations and word-representable graphs. {\em Discr. Appl. Math.} {\bf 201} (2016) 164--171.
\bibitem{HS1981} P.L. Hammer and Simeone, B. The Splittance of a Graph, {\em Combinatorica} {\bf 1} (1981) 275--284.
\bibitem{KLMW17} S. Kitaev, Y. Long, J. Ma, H. Wu. Word-representability of split graphs, {\em J. of Combin.} {\bf 12} (2021) 4, 725--746.
\bibitem{KL15} S. Kitaev and V. Lozin. Words and Graphs, {\em Springer}, 2015.
\bibitem{KP08} S. Kitaev and A. Pyatkin. On representable graphs. {\em J. Autom., Lang. and Combin.} {\bf 13} (2008) 1, 45--54.
\bibitem{KP23} S. Kitaev and A. Pyatkin. On semi-transitive orientability of triangle-free graphs, {\em Discussiones Mathematicae Graph Theory} {\bf 43} (2023), ID: 4621, page 533.
\bibitem{KP24} S. Kitaev and A. Pyatkin. On semi-transitive orientability of split graphs, {\em Information Processing Letters} {\bf 184} (2024), 106435.
\bibitem{KitaevSun} S. Kitaev and H. Sun. Human-verifiable proofs in the theory of word-representable graph, {\em RAIRO -- Theoretical Informatics and Appl.}, to appear.
\bibitem{LB1962} C.G. Lekkerkerker and J.C. Boland. Representation of a finite graph by a set of intervals on the real line, {\em Fundamenta Mathematicae} {\bf 51} (1962), 45--64.
\bibitem{Myc} J. Mycielski, Sur le coloriage des graphes, {\em Colloq. Math.} {\bf 3 (2)} (1955), 161--162.
\bibitem{S1994} J.P. Spinrad. Recognition of circle graphs, {\em Journal of Algorithms} {\bf 16(2)} (1994), 264--282.
\bibitem{Spinrad} J.P. Spinrad. Efficient Graph Representations, {\em Fields Institute Monographs}, Volume: 19; 2003; 342 pp. (ISBN 978-1-4704-3146-4).
\end{thebibliography}
\end{document}